\newtheorem{theorem}{Theorem}[section]
\newtheorem{proposition}[theorem]{Proposition}
\newtheorem{lemma}[theorem]{Lemma}
\newtheorem{corollary}[theorem]{Corollary}
\newtheorem*{ack*}{Acknowledgment}
\theoremstyle{remark}
\newtheorem{rem}{Remark}
\newtheorem*{ex*}{Example}
\def\ie{\emph{i.e.}\xspace}
\def\etal{\emph{et al.}\xspace}
\def\resp{\emph{resp.}\xspace}
\def\Phit{\ensuremath{\Phi_{\!\small\triangle}\!}}
\def\sC{\scalebox{.8}{\ensuremath{C}}\xspace}
\def\tsC{\scalebox{.8}{\ensuremath{\tilde C}}\xspace}
\def\sD{\scalebox{.8}{\ensuremath{D}}\xspace}
\def\sE{\scalebox{.8}{\ensuremath{E}}\xspace}
\def\sP{\scalebox{.8}{\ensuremath{P}}\xspace}
\def\sQ{\scalebox{.8}{\ensuremath{Q}}\xspace}
\def\sR{\scalebox{.8}{\ensuremath{R}}\xspace}
\def\sM{\scalebox{.8}{\ensuremath{M}}\xspace}
\def\sO{\scalebox{.8}{\ensuremath{O}}\xspace}
\def\sT{\scalebox{.8}{\ensuremath{T}}\xspace}
\def\carre{{\scalebox{.4}{\ensuremath{\blacksquare}}\xspace}}
\def\trait{-\xspace}
\def\cA{\ensuremath{\mathcal{A}^\carre}\xspace}
\def\cB{\ensuremath{\mathcal{A}^\trait}\xspace}
\def\cD{\ensuremath{\mathcal{D}}\xspace}
\def\cE{\ensuremath{\mathcal{E}}\xspace}
\def\cTc{\ensuremath{\mathcal{T}^\carre}\xspace}
\def\cTt{\ensuremath{\mathcal{T}^\trait}\xspace}
\def\cTct{\ensuremath{\mathcal{T}^{\bullet\!\!\trait}}\xspace}
\def\cS{\ensuremath{\mathcal{S}}\xspace}
\def\cU{\ensuremath{\mathcal{U}}\xspace}
\def\cV{\ensuremath{\mathcal{V}}\xspace}
\def\cX{\ensuremath{\mathcal{X}}\xspace}
\def\cQ{\ensuremath{\mathcal{Q}}\xspace}
\def\tQ{\ensuremath{\widetilde{Q}}\xspace}
\def\tR{\ensuremath{\widetilde{R}}\xspace}
\def\cff{\ensuremath{t^\carre}\xspace}
\def\cgg{\ensuremath{t^\trait}\xspace}
\def\chh{\ensuremath{t^{\bullet\!\!\trait}}\xspace}
\def\cuu{\ensuremath{u}\xspace}
\def\cvv{\ensuremath{v}\xspace}
\def\cT{\ensuremath{\mathcal{T}}\xspace}
\def\cM{\ensuremath{\mathcal{M}}\xspace}
\def\cF{\ensuremath{\mathcal{F}}\xspace}
\def\cFki{\ensuremath{\cF_i^{(k)}}\xspace}
\def\cGki{\ensuremath{\cG_i^{(k)}}\xspace}
\def\cHki{\ensuremath{\cH_i^{(k)}}\xspace}
\def\Fki{\ensuremath{F^{(k)}_i}\xspace}
\def\cG{\ensuremath{\mathcal{G}}\xspace}
\def\Gki{\ensuremath{G^{(k)}_i}\xspace}
\def\cH{\ensuremath{\mathcal{H}}\xspace}
\def\Hki{\ensuremath{H^{(k)}_i}\xspace}
\def\cD{\ensuremath{\mathcal{D}}\xspace}
\def\cDk{\ensuremath{\cD^{(k)}}\xspace}
\def\cDki{\ensuremath{\cD_i^{(k)}}\xspace}
\def\Dki{\ensuremath{D^{(k)}_i}\xspace}
\def\geq{\geqslant}
\def\leq{\leqslant}
\begin{document}

\title{On symmetric quadrangulations and triangulations}
\author{Marie Albenque \and \'Eric Fusy \and Dominique Poulalhon}
\address{LIX, \'Ecole Polytechnique, 91128 Palaiseau cedex, France}
\thanks{Supported by the European project ExploreMaps -- ERC StG 208471}
\thanks{Email: \texttt{albenque,fusy,poulalhon@lix.polytechnique.fr}}

\begin{abstract}
  This article presents new enumerative results related to symmetric
  planar maps.  In the first part a new way of enumerating rooted
  simple quadrangulations and rooted simple triangulations is
  presented, based on the description of two different quotient
  operations on symmetric simple quadrangulations and triangulations.
  In the second part, based on results of Bouttier, Di Francesco and
  Guitter and on quotient and substitution operations, the series of
  three families of symmetric quadrangular and triangular dissections
  of polygons are computed, with control on the distance from the
  central vertex to the outer boundary.
\end{abstract}

\maketitle
\section*{Introduction}
Enumeration of families of plane maps, that is, plane embeddings of
graphs, has received a lot of attention since the 60's; several
methods can be applied: the recursive method introduced by
Tutte~\cite{Tu63}, the random matrix method introduced by Br\'ezin et
al~\cite{Bre}, and the bijective method introduced by Cori and
Vauquelin~\cite{CoriVa} and Schaeffer~\cite{S-these}.  In the first
part of this note, we show another method for the enumeration of
rooted simple quadrangulations and triangulations based on quotienting
symmetric simple versions of them.  Historically, the enumeration of
symmetric maps of order $k$ (\ie, such that a rotation of order $k$
fixes the map) was reduced to the enumeration of rooted maps via a
quotient argument, a method used by Liskovets~\cite{Li78}.  We proceed
in the reverse way, namely we use two quotient operations on symmetric
simple quadrangulations and triangulations to build in each case an
algebraico-differential equation (Equations~\eqref{eq:q}
and~\eqref{eq:t}) satisfied by the generating series of rooted
corresponding simple maps, which can be explicitly solved to obtain
the formulas for the number of rooted simple quadrangulations (due to
Tutte~\cite{Tu63} and bijectively proved by Schaeffer~\cite{S-these})
and of rooted simple triangulations (due to Tutte~\cite{Tu62} and
bijectively proved by Poulalhon and Schaeffer~\cite{PoSc06}).  One
quotient operation is classical and is decribed in
Section~\ref{sec:clasquo}; the other quotient operation is new and, as
described in Section~\ref{sec:ori}, relies deeply on the existence and
properties of $\alpha$-orientations; the new equations for generating
series of simple quadrangulations and triangulations are derived and
solved in Section~\ref{sec:equation}.

The results in the second part are expressions of the series of
several families of symmetric quadrangular and triangular dissections
with control on the distance from the central vertex to the outer
boundary.  We recall that symmetric dissections have been counted
according to the number of inner faces by
Brown~\cite{Brown_trig,Brown_quad} using the recursive method
(Liskovet's quotient method~\cite{Li78} can also be applied, reducing
the enumeration to rooted quadrangular dissections). Our approach,
developed in Sections~\ref{sec:dist} and~\ref{sec:dist_trig}, relies
on the quotient method and substitution operations combined with
results by Bouttier \etal~\cite{BoDFGu03,BoGu12}, which express the
series of quadrangulations or triangulations with a marked vertex and
marked edge at prescribed distance from each other.
Our expressions illustrate again the property that the series
expression of a ``well behaved'' map family \cM refined by a distance
parameter $d$ is typically expressed in terms of the $d$th power of an
algebraic series of singularity type $z^{1/4}$ (implying that
asymptotically the distance parameter $d$ on a random map of size $n$
in \cM converges in the scale $n^{1/4}$ as a random variable).

\section{Plane maps, symmetry and classical quotient}\label{sec:clasquo}

\subsection{Triangulations, quadrangulations and dissections}
A \emph{plane map} is a connected graph embedded in the plane up to
continuous deformation; the unique unbounded face of a plane map is
called the \emph{outer face}, the other ones are called \emph{inner
  faces}. Vertices and edges are also called outer if they belong to
the outer face and inner otherwise. A map is said to be \emph{rooted}
if an edge of the outer face is marked and oriented so as to have the
outer face on its left. This edge is the \emph{root edge}, and its
origin is the \emph{root vertex}. A map is \emph{pointed} if one of
its \emph{inner} vertices is marked.  For any map \sM, we denote by
$\cV(\sM)$, $\cF(\sM)$, $\cE(\sM)$ its sets of vertices, faces and
edges, and by $v(\sM)$, $f(\sM)$, $e(\sM)$ their cardinalities.

Triangulations and quadrangulations are respectively maps with all
faces of degree 3 or 4, and to avoid the degenerated cases, the outer
face is required to be a simple cycle. For $k\geq 1$ and $d\geq3$, a
\emph{$d$-angular dissection of a $k$-gon} or \emph{$d$-angular
  $k$-dissection} is a map whose outer face contour is a simple cycle
of length $k$, and with all inner faces of one and the same
degree~$d$. A dissection is said to be \emph{triangular} if $d$
equals~3, \emph{quadrangular} if $d$ equals~4. Observe that
quadrangular $k$-dissections can only exist for even $k$.

A map is said to be \emph{simple} if it has no multiple edges; a
$d$-angular $k$-dissection is called \emph{irreducible} if the
interior of every cycle of length at most $d$ is a face.

\subsection{Symmetric maps and classical quotient}\label{sub:clasquo}
 For $k\geq 2$, a dissection \sD is said to be \emph{$k$-symmetric} if 
its plane embedding (conveniently deformed) is invariant by a
$2\pi/k$-rotation centered at a vertex -- called the \emph{center}
of~\sD.

As observed by Liskovets~\cite{Li78}, any two semi-infinite straight
lines starting from the center and forming an angle of $2\pi/k$
delimit a sector of~\sD. When keeping only this sector and pasting
these two lines together, we obtain a plane map, called the
\emph{$k$-quotient map} of \sD. In other words, the $2\pi/k$-rotation
defines equivalence relations on the sets $\cV(\sD)$ and $\cE(\sD)$,
and the quotient map of \sD is the map in which equivalent vertices
and equivalent edges are
identified. Figure~\ref{fig:classical_quotient} shows the example of
two symmetric dissections of an hexagon and their quotients.  Denote
by $o(D)$ the degree of the outer face of a dissection.  The following
lemma is straightforward:

\begin{figure}
  \centering
  \subfigure[\label{fig:3-sym}]{\includegraphics[page=1,scale=0.7]{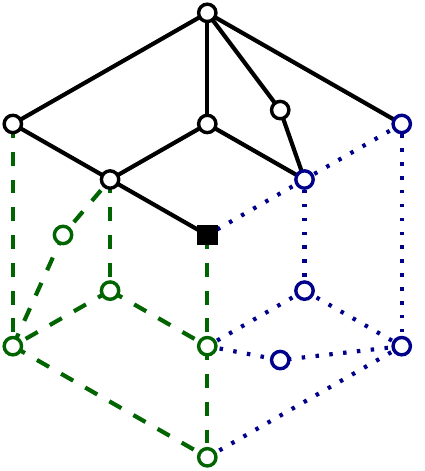}}\quad
  \subfigure[\label{fig:3-quo}]{\includegraphics[page=2,scale=0.7]{hexagone_sym}}\qquad\vline\qquad
  \subfigure[\label{fig:2-sym}]{\includegraphics[page=3,scale=0.7]{hexagone_sym}}\qquad
  \subfigure[\label{fig:2-quo}]{\includegraphics[page=4,scale=0.7]{hexagone_sym}}
  \caption{Examples of symmetric 6-dissections. A 3-symmetric
    quadrangular dissection~\subref{fig:3-sym}. Its
    3-quotient~\subref{fig:3-quo} is a pointed quadrangular 2-dissection. 
    A 2-symmetric triangular dissection~\subref{fig:2-sym}. Its
    2-quotient~\subref{fig:2-quo} is a pointed triangulation.}
  \label{fig:classical_quotient}
\end{figure}

\begin{lemma}\label{lem:quotient_count}
  For $k\geq 2$, let $\sD$ be a $k$-symmetric dissection,
  and $\sE$ its $k$-quotient; we have:
  \[
  v(\sD) - 1 = k\, (v(\sE)-1) , \quad e(\sD) = k\,e(\sE), \quad f(\sD)
  - 1 = k\, (f(\sE)-1), \quad o(\sD)=k\,o(\sE).
  \]
\end{lemma}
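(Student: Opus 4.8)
The plan is to realise the $2\pi/k$-rotation as a free action (away from the center) of a cyclic group and then count orbits. Write $\rho$ for the $2\pi/k$-rotation and $G=\langle\rho\rangle\cong\mathbb{Z}/k\mathbb{Z}$; by construction the quotient map $\sE$ is exactly $\sD/G$. The one geometric fact I will use repeatedly is that, for $0<j<k$, the rotation $\rho^j$ is a nontrivial planar rotation and so has the center $c$ as its \emph{unique} fixed point. A first remark is that $c$ cannot lie on the outer cycle: otherwise this simple closed curve would pass through the fixed point $c$ and be invariant under $\rho$, forcing several distinct arcs to emanate from $c$ and contradicting simplicity. Hence $c$ is an inner vertex and the entire outer cycle is moved by $\rho$.

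For vertices, the center $c$ is fixed by every element of $G$, while any other vertex $x$ has an orbit whose size divides $k$; a proper divisor would mean $\rho^j(x)=x$ for some $0<j<k$, which is impossible since $x\neq c$. Thus $\cV(\sD)\setminus\{c\}$ is partitioned into free orbits of size $k$, each collapsing to one vertex of $\sE$, and $c$ yields one further vertex of $\sE$. This gives $v(\sD)-1=k\,(v(\sE)-1)$. For edges I would then show that no edge is fixed by a nontrivial $\rho^j$: if $\rho^j$ preserves an edge $ab$ setwise it either fixes both endpoints (impossible, as at most one of them is $c$) or swaps them, but swapping $a$ and $b$ forces the fixed point $c$ to lie in the interior of the arc $ab$, contradicting that $c$ is a vertex and that edge interiors contain no vertices. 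Hence $G$ acts freely on $\cE(\sD)$, every orbit has size $k$, and $e(\sD)=k\,e(\sE)$. The same freeness, restricted to the $o(\sD)$ edges of the rotation-invariant outer cycle on which $G$ acts cyclically, shows that one fundamental sector contains exactly $o(\sD)/k$ of them, and these constitute the outer boundary of $\sE$; therefore $o(\sD)=k\,o(\sE)$.

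Rather than repeat an orbit argument for faces — which would require the mildly delicate point that the only $\rho$-fixed face is the outer one — I would obtain the face identity from the previous two relations together with Euler's formula. Both $\sD$ and its quotient $\sE$ are connected plane maps, so $v(\sD)-e(\sD)+f(\sD)=2$ and $v(\sE)-e(\sE)+f(\sE)=2$. Substituting $v(\sD)=k(v(\sE)-1)+1$ and $e(\sD)=k\,e(\sE)$ into the first identity and simplifying with the second yields $f(\sD)-1=k\,(f(\sE)-1)$.

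I expect the only genuine subtlety to be the freeness statements for vertices and edges, that is, the careful exploitation of ``a nontrivial planar rotation fixes only its center,'' and in particular ruling out that an edge be reversed by some $\rho^j$. Once those are in place, everything else is bookkeeping — orbit counting and one application of Euler's formula — which is exactly why the lemma can be called straightforward.
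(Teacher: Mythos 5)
The paper itself offers no argument for this lemma --- it is introduced with ``The following lemma is straightforward'' --- so there is no proof of record to compare against. Your route (realise the symmetry as an action of $\mathbb{Z}/k\mathbb{Z}$ that is free away from the center, count orbits of vertices and edges, and get the face identity from Euler's formula applied to both maps) is the natural one, essentially Liskovets' classical quotient argument that the paper invokes. The vertex count, the outer-degree count, and in particular the Euler-formula derivation of $f(\sD)-1=k\,(f(\sE)-1)$ --- which neatly sidesteps having to decide which faces are rotation-invariant --- are all sound.

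There is, however, one genuine hole in your edge-freeness step: loops based at the center. Dissections in this paper are allowed to have loops (the quotients of symmetric triangulations are triangular $1$-dissections, whose outer face \emph{is} a loop, and the general symmetric triangular dissections of the last section may have inner loops), so nothing a priori excludes a loop of $\sD$ attached at $c$. For such an edge your dichotomy collapses: both endpoints equal $c$, so ``fixes both endpoints --- impossible, as at most one of them is $c$'' yields no contradiction (both endpoints are $c$ and are genuinely fixed), while the ``swap'' branch, which is where you place the fixed point in the interior of the arc, is simply not forced. This is exactly the case that would break $e(\sD)=k\,e(\sE)$: a loop at $c$ invariant under some nontrivial $\rho^j$ would have an edge-orbit of size strictly less than $k$. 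The case needs its own short argument, for instance: such a loop is a Jordan curve preserved by $\rho^j$; since $\rho^j$ maps the bounded side of the curve to itself, its restriction to the curve is an orientation-preserving finite-order homeomorphism of a circle with a fixed point ($c$), hence the identity on the curve; but then every point of the loop is fixed by $\rho^j$, contradicting that $c$ is its unique fixed point. The same tool, incidentally, makes rigorous your somewhat loose claim that $c$ cannot lie on the outer cycle (``several distinct arcs emanate from $c$'' is not, by itself, a contradiction). With these two points patched, your proof is complete.
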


The quotient operation clearly preserves the degrees of vertices and
faces, hence quotients of $d$-angular dissections are $d$-angular
dissections. More precisely, as any $k$-symmetric dissection is
implicitly pointed (at the center), its $k$-quotient is a pointed
dissection.  We define the \emph{radial distance} $r(\sD)$ of a
pointed dissection \sD as the distance between its marked vertex and
the outer face boundary (for instance, the example of
Figure~\ref{fig:3-quo} has radial distance~2 while that of
Figure~\ref{fig:2-quo} has radial distance~1). We also $\ell(\sD)$ the
length of a shortest cycle strictly enclosing $u$.
The following lemma 
summarizes the relations between distances in $k$-symmetric
dissections and their $k$-quotients:

\begin{lemma}\label{lem:quotient_distance}
  For $k\geq 2$, let $\sD$ be a $k$-symmetric dissection,
  and $\sE$ its $k$-quotient; we have:
  \[
  r(\sD)=r(\sE) \quad\text{and}\quad \ell(\sD)=k\,\ell(\sE).
  \]
\end{lemma}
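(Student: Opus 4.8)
The plan is to treat the two equalities separately, using the quotient map $\pi\colon\sD\to\sE$, which identifies each vertex and edge with its orbit under the $2\pi/k$-rotation $\rho$. Writing $c$ for the center of $\sD$, the map $\pi$ is a graph morphism that is $k$-to-one on vertices and edges lying away from $c$ (since $\rho$ fixes only $c$), while $c$ is the unique preimage of the marked vertex $u=\pi(c)$; away from $c$ it is a local homeomorphism, so paths lift uniquely once a starting edge is chosen. For $r(\sD)=r(\sE)$ I would then argue by two inequalities. Projecting a shortest path in $\sD$ from $c$ to an outer vertex gives a walk in $\sE$ from $u$ to an outer vertex of the same length, whence $r(\sE)\le r(\sD)$. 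Conversely, a shortest path in $\sE$ from $u$ to the outer boundary lifts, edge by edge starting at $c$, to a walk in $\sD$ of the same length ending at an outer vertex, whence $r(\sD)\le r(\sE)$. Both steps are routine once the covering picture is in place.

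The equality $\ell(\sD)=k\,\ell(\sE)$ is the substantial part. For $\ell(\sD)\le k\,\ell(\sE)$ I would take a shortest cycle $C_0$ strictly enclosing $u$ in $\sE$ and lift it: cutting $\sE$ open along a ray from $u$ to the boundary turns $C_0$ into a family of arcs contained in one fundamental sector, and concatenating the $k$ rotated copies $\rho^i$ of these arcs produces a $\rho$-invariant closed curve in $\sD$ that winds once around $c$, hence strictly encloses it, of total length $k\,|C_0|=k\,\ell(\sE)$; extracting a simple enclosing subcycle can only shorten it.

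For the reverse inequality $\ell(\sD)\ge k\,\ell(\sE)$ the key claim is that some shortest cycle strictly enclosing $c$ in $\sD$ may be chosen $\rho$-invariant. I would prove this by an uncrossing argument: among all shortest enclosing cycles, pick one, $C$, enclosing the fewest inner faces. If $C\ne\rho(C)$, then $\rho(C)$ is another shortest enclosing cycle enclosing the same number of faces; writing $\Omega,\Omega'$ for the closed disks they bound (both containing $c$), the boundaries of $\Omega\cap\Omega'$ and $\Omega\cup\Omega'$ are enclosing closed curves with $|\partial(\Omega\cap\Omega')|+|\partial(\Omega\cup\Omega')|\le|C|+|\rho(C)|$, so $\partial(\Omega\cap\Omega')$ is a shortest enclosing cycle strictly smaller than $C$, a contradiction. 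Hence $C=\rho(C)$, and then $\pi(C)$ is a simple cycle strictly enclosing $u$ whose length is $|C|/k$ for the same reason that $e(\sD)=k\,e(\sE)$ in Lemma~\ref{lem:quotient_count}, giving $\ell(\sE)\le\ell(\sD)/k$. Combining the two inequalities yields the claim.

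The main obstacle is exactly this symmetrization step: making the submodularity-of-perimeter argument rigorous in the map setting requires checking that the component of $\Omega\cap\Omega'$ containing $c$ is a topological disk and that the boundary-length inequality holds at the level of edges, including the degenerate cases where $C$ and $\rho(C)$ share edges or vertices. Once this is settled the factor $k$ appears automatically from the $\rho$-invariance of the extremal cycle, and the remaining verifications are the straightforward project/lift computations described above.
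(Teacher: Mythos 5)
Your proof is correct, and its crucial step---symmetrizing a shortest enclosing cycle by an uncrossing argument---is exactly the paper's: the paper likewise takes a shortest cycle strictly enclosing the center that is minimal for inclusion, intersects it with its image under the rotation, and gets a contradiction from the fact that the two cycles cut out of the union/intersection both enclose the center and have total length $2\,\ell(\sD)$, so one of them is a shortest enclosing cycle strictly inside the original one. The only genuine divergence is in the radial-distance part: where you project and lift paths through the (branched) covering $\pi\colon \sD\to\sE$, the paper instead characterizes the distance labelling $\delta_D$ as the unique labelling-function (values differing by at most $1$ across edges, value $0$ at the pointed vertex) in which every non-pointed vertex has a neighbour of smaller label, and observes that this characterization passes to the quotient, so $\delta_E$ is the quotient of $\delta_D$. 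Both routes are routine; the labelling argument avoids any covering-space language, while your lifting argument has the advantage of also furnishing, via the arc-concatenation you describe, the inequality $\ell(\sD)\le k\,\ell(\sE)$, a direction the paper compresses into the unproved assertion that the cycle statement is ``equivalent'' to the existence of a symmetric minimal enclosing cycle. Finally, the technical points you flag as the main obstacle---that the component of $\Omega\cap\Omega'$ containing the center is a disk, and that the boundary-length inequality holds at the edge level when $C$ and $\rho(C)$ share edges or vertices---are glossed over at exactly the same level of detail in the paper's own proof, so they are not a gap peculiar to your write-up.
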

\begin{proof}
  Let \sP be a pointed dissection, with pointed vertex $u$.  A
  labelling-function is a function $\lambda: \cV(\sP) \to\mathbb{Z}$
  such that $\lambda(u)=0$ and for all adjacent vertices $v$ and $v'$,
  $|\lambda(v)-\lambda(v')|\leq 1$. It is easy to check that the
  function $\delta_{P}$ giving the distance from $u$, called
  distance-labelling, is the unique labelling-function such that each
  vertex $v\neq u$ has a neighbour of smaller label.  With this
  characterization, it is straightforward that $\delta_{E}$ is the
  quotient of $\delta_{D}$; in particular, the radial distance is
  the same in \sD as in~\sE.

  The second statement is equivalent to the following one: one among
  the cycles of minimal length stricly enclosing the center is itself
  symmetric. Indeed, let \sC be a cycle of minimal length $\ell(\sD)$
  strictly enclosing the center that is minimal for inclusion, and
  suppose that \sC is asymmetric. Let \tsC be its image by the
  $\pi/k$-rotation; as they both enclose the center, \sC and \tsC are
  intersecting cycles and hence define two other cycles enclosing the
  center with total length $2 \ell(\sD)$, hence each of length
  $\ell(\sD)$; one of these two cycles is included in \sC and \tsC,
  contradiction.
\end{proof}

\subsection{Symmetric simple quadrangulations and triangulations}

A pointed dissection is called \emph{quasi-simple} if the pointed
vertex lies strictly in the interior of every 1-cycle or 2-cycle; in
particular each vertex can carry at most one loop, otherwise the two
loops would form a 2-cycle that does not contain the pointed vertex.
Unlike that of simple dissections, the family of quasi-simple
dissections is stable under quotient:
\begin{lemma}\label{lem:quotient_quasisimple}
  For $k\geq 2$, let $\sD$ be a $k$-symmetric dissection and $\sE$ its
  $k$-quotient -- both canonically pointed.  Then $\sD$ is
  quasi-simple if and only if $\sE$ is quasi-simple.
\end{lemma}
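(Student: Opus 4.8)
The plan is to analyze how the $1$-cycles and $2$-cycles of $\sD$ correspond to those of $\sE$ under the quotient map $\pi\colon\sD\to\sE$, while tracking the distinguished vertex (the center of $\sD$, whose image is the marked vertex $u$ of $\sE$). The basic tool is that $\pi$ is a $k$-fold branched covering of the sphere, branched only over $u$ and over the outer point fixed by the rotation: since the only fixed point of the $2\pi/k$-rotation is the central vertex, no edge and no noncentral vertex is fixed, so every edge-orbit and every noncentral vertex-orbit has exactly $k$ elements, in accordance with Lemma~\ref{lem:quotient_count}. The key consequence is a dichotomy for a cycle $C$ of $\sE$ avoiding $u$: if $C$ does not enclose $u$, its monodromy in $\mathbb{Z}/k$ is trivial and $\pi^{-1}(C)$ is $k$ disjoint isomorphic copies of $C$; if $C$ strictly encloses $u$, its monodromy is a generator and $\pi^{-1}(C)$ is a single symmetric cycle of length $k\,|C|$ winding once around the center. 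A cycle having $u$ as a vertex is instead handled by the local branched picture $z\mapsto z^k$ at the center.

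For the forward implication I argue by contraposition: assuming $\sE$ is not quasi-simple, I exhibit a bad cycle in $\sD$. Let $C$ be a loop or $2$-cycle of $\sE$ not strictly enclosing $u$. If $u$ lies strictly outside $C$, the dichotomy yields $k$ disjoint lifts, each a loop or $2$-cycle of $\sD$ whose interior is a homeomorphic copy of the interior of $C$ and hence avoids the center, giving a bad cycle. If instead $u$ is a vertex of $C$, the local picture at the center shows that a loop at $u$ lifts to $k$ loops at the center, while a $2$-cycle with one endpoint $u$ lifts to $k$ ``petal'' $2$-cycles each having the center as a vertex; in both cases the center lies on the lifted cycle. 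Either way $\sD$ is not quasi-simple.

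The delicate direction is the converse, again by contraposition: starting from a bad loop or $2$-cycle $C'$ of $\sD$, I project it to $\sE$. Since ``bad'' means the center is not strictly inside, either the center is a vertex of $C'$ -- in which case $\pi(C')$ is a loop or $2$-cycle of $\sE$ having $u$ as a vertex, hence bad -- or the center lies strictly outside $C'$. In the latter case $C'$ is a lift of length equal to $|\pi(C')|$, so the monodromy of $\pi(C')$ is trivial and by the dichotomy $u$ lies outside $\pi(C')$, again bad. \textbf{The main obstacle} is to guarantee that $\pi$ is injective on $C'$, so that $\pi(C')$ is a genuine loop or $2$-cycle of the same length; the only way this can fail is that $C'$ is a $2$-cycle whose two edges are swapped by the half-turn $\rho^{k/2}$ (with $k$ even). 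I resolve this with a topological argument in the spirit of the proof of Lemma~\ref{lem:quotient_distance}: a Jordan curve invariant under the half-turn about the center must pass through the center or enclose it, since its two fixed points (the center and the outer point) cannot both lie in its outer region (a Brouwer fixed-point argument on each complementary disk). This contradicts ``center strictly outside,'' so the degenerate case never occurs; and if instead the two endpoints of $C'$ are identified in $\sE$, the image is a vertex carrying two distinct loops, which is itself a bad $2$-cycle. In every case $\sE$ inherits a bad cycle, completing the equivalence.
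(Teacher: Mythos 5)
Your covering-space framework handles the generic cases correctly, and it is far more rigorous than the paper's own proof, which is a one-line assertion that bad 2-cycles of $\sD$ and of $\sE$ correspond to each other; in particular your monodromy dichotomy, the lifted-disk argument, and the Brouwer fixed-point argument ruling out a 2-cycle whose edges are swapped by the half-turn (with the center strictly outside) are all sound. The genuine gap is in your forward direction. Under the paper's definition, quasi-simplicity also forbids two loops at a common vertex (``each vertex can carry at most one loop, otherwise the two loops would form a 2-cycle that does not contain the pointed vertex''), and this degenerate kind of 2-cycle is exactly what your contraposition never processes: $\sE$ can fail to be quasi-simple while every individual loop and every 2-cycle on two distinct vertices strictly encloses $u$, namely when some vertex $w$ carries two nested loops $\ell_1,\ell_2$ both strictly enclosing $u$. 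Such a configuration is neither ``$u$ strictly outside $C$'' nor ``$u$ a vertex of $C$'' for any genuine cycle $C$, and if you try to feed the wedge $\ell_1\cup\ell_2$ itself into your dichotomy it breaks down (it is not a Jordan curve, and its preimage is not $k$ disjoint copies). So in this case your argument produces no bad cycle in $\sD$ at all.

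The case is non-vacuous and cannot be dismissed. Take $k=3$ and let $\sE$ be the pointed triangular 1-dissection with vertices $u,w,y,z$: an edge $uw$ and one triangle inside a loop $\ell_1$ at $w$; a second loop $\ell_2$ at $w$ around $\ell_1$, the pinched annulus between them filled by $y$ and two edges $yw$ (two triangles); then two edges $wz$ and the outer loop at $z$ (two more triangles). Every loop and every two-vertex 2-cycle of $\sE$ strictly encloses $u$, yet $\sE$ is not quasi-simple because of the pair $\ell_1,\ell_2$. In the 3-symmetric dissection $\sD$, each $\ell_j$ has generator monodromy (it encloses $u$), so it lifts to a single 3-cycle $C_j$ through the three preimages $w_1,w_2,w_3$ of $w$; the nested cycles $C_1,C_2$ touch exactly at these three vertices, so the region between them splits into three bigons, each a genuine 2-cycle of $\sD$ whose interior avoids the center. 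Thus $\sD$ is indeed not quasi-simple, but only via 2-cycles your argument never exhibits; worse, under the reading of ``2-cycle'' you implicitly use in the forward direction (two distinct vertices joined by two edges), this $\sE$ would count as quasi-simple and the lemma would be false, so the degenerate convention is essential, not pedantic. The repair is precisely the lifting computation above: when $\sE$ has two loops at a vertex both strictly enclosing $u$, their preimages are two $k$-cycles sharing their $k$ vertices, and the $k$ bigons between them are 2-cycles of $\sD$ not enclosing the center. (No analogous case is needed in your converse direction: by Lemma~\ref{lem:quotient_distance} no loop of $\sD$ can strictly enclose the center, so a pair of loops at a vertex of $\sD$ already yields a bad individual loop.) With this case added, your proof is complete.
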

\begin{proof}
  This just follows from the fact that any 2-cycle of $\sD$ not
  enclosing the pointed vertex yields a 2-cycle of $\sE$ not enclosing
  the pointed vertex, and reciprocally.
\end{proof}
Note also that, according to Lemma~\ref{lem:quotient_distance}, no
cycle enclosing the center of a $k$-symmetric dissection \sD can be of
length smaller than~$k$. In particular, \sD has no loop, and no
2-cycle enclosing the center if $k>2$. Hence if $k>2$, $k$-symmetric
quasi-simple dissections are indeed simple.

\paragraph{Symmetric simple quadrangulations}
Lemma~\ref{lem:quotient_count} implies that quadrangulations, having
outer degree~4, may only be 2- or 4-symmetric. Moreover, their
quotients are quadrangular dissections, hence they are bipartite,
which prevents them of containing loops, in particular the outer face
has length at least~2. Hence quadrangulations may only be 2-symmetric
(which we simply call \emph{symmetric}), and quotients of symmetric
quadrangulations are pointed quadrangular 2-dissections.  This implies
also that quasi-simple symmetric quadrangulations are indeed
simple. Hence the following proposition is a direct consequence of
Lemma~\ref{lem:quotient_quasisimple}:

\begin{proposition}\label{prop:clasquoquad}
  The 2-quotient is a one-to-one correspondence between symmetric
  simple quadrangulations with $2n$ inner faces and quasi-simple
  pointed 2-dissections with $n$ inner faces.
\end{proposition}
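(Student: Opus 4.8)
The plan is to present the 2-quotient as one direction of an explicit bijection whose inverse is the ``unfolding'' that glues two copies of a pointed dissection interchanged by the central rotation, and then to check that the two combinatorial classes are matched by these mutually inverse operations. The real content is only the bookkeeping supplied by the three quotient lemmas, plus the reconciliation of the two notions of simplicity appearing on either side.

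First I would verify that the 2-quotient sends the source class into the target class. Let $\sD$ be a symmetric simple quadrangulation with $2n$ inner faces and let $\sE$ be its 2-quotient. The discussion preceding the statement shows that $\sE$ is a pointed quadrangular 2-dissection, and Lemma~\ref{lem:quotient_count} with $k=2$ gives $f(\sD)-1=2(f(\sE)-1)$, so that $\sE$ has exactly $n$ inner faces. Since $\sD$ is simple it carries neither a loop nor a 2-cycle, hence it is vacuously quasi-simple (at the center), and Lemma~\ref{lem:quotient_quasisimple} then forces $\sE$ to be quasi-simple.

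Next I would run the construction in reverse. Starting from a quasi-simple pointed quadrangular 2-dissection $\sE$ with $n$ inner faces, unfolding produces a 2-symmetric dissection $\sD$ all of whose inner faces have degree $4$ and whose outer face has degree $2\cdot 2=4$ by Lemma~\ref{lem:quotient_count}; that is, $\sD$ is a symmetric quadrangulation, and the same lemma endows it with $2n$ inner faces. Lemma~\ref{lem:quotient_quasisimple} read in the other direction makes $\sD$ quasi-simple, and, as observed just before the statement, a quasi-simple symmetric quadrangulation is automatically simple: being a quadrangulation it is bipartite, hence loopless, while $\sE$, also bipartite, satisfies $\ell(\sE)\geq 2$, so Lemma~\ref{lem:quotient_distance} yields $\ell(\sD)=2\,\ell(\sE)\geq 4$; thus no 2-cycle can enclose the center, and by quasi-simplicity there is then no 2-cycle at all. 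Hence unfolding maps the target class into the source class.

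Finally I would observe that quotient and unfold are mutually inverse: a 2-symmetric dissection is recovered from its quotient precisely by gluing back the two halves exchanged by the $\pi$-rotation, and conversely quotienting an unfolded map returns the original pointed dissection. Together with the two preceding paragraphs this gives the asserted one-to-one correspondence. I expect the only genuinely delicate point to be this last reconciliation, namely that ``simple'' is demanded of the quadrangulation whereas only ``quasi-simple'' is available for the quotient; the gap is bridged exactly by the length relation $\ell(\sD)=2\,\ell(\sE)$ of Lemma~\ref{lem:quotient_distance} combined with bipartiteness, while everything else is a direct transcription of Lemmas~\ref{lem:quotient_count} and~\ref{lem:quotient_quasisimple}.
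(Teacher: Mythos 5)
Your proof is correct and follows essentially the same route as the paper, which deduces the proposition directly from Lemmas~\ref{lem:quotient_count} and~\ref{lem:quotient_quasisimple} together with the preceding observation that bipartiteness and the length relation $\ell(\sD)=2\,\ell(\sE)$ make quasi-simple symmetric quadrangulations simple. You merely spell out explicitly (the unfolding inverse and the simple/quasi-simple reconciliation) what the paper leaves implicit.
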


\paragraph{Symmetric simple triangulations}
Similarly, triangulations may only be 3-symmetric (which we simply
call \emph{symmetric}), and the 3-quotient of a symmetric
triangulation is a pointed triangular 1-dissection.  Note that a
quasi-simple symmetric triangulation is simple (indeed, by
Lemma~\ref{lem:quotient_distance}, a 3-symmetric dissection $\sD$
satisfies $\ell(\sD)\geq 3$).  Hence:

\begin{proposition}\label{prop:quoclastrig}
  The 3-quotient is a one-to-one correspondence between symmetric
  simple triangulations with $3n$ inner faces and quasi-simple pointed
  triangular 1-dissections with $n$ triangular faces, where $n$ is any
  positive odd integer.
\end{proposition}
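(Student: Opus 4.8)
The plan is to mirror the proof of Proposition~\ref{prop:clasquoquad}, treating the 3-quotient and its inverse (the 3-fold unfolding of a fundamental sector around the center) as mutually inverse operations, and then to pin down the admissible sizes. First I would check that the 3-quotient maps the first family into the second: starting from a symmetric simple triangulation $\sD$, simplicity trivially implies quasi-simplicity, so by Lemma~\ref{lem:quotient_quasisimple} the quotient $\sE$ is quasi-simple. Since quotienting preserves face degrees, $\sE$ is triangular; since $o(\sD)=3$, Lemma~\ref{lem:quotient_count} gives $o(\sE)=1$, so $\sE$ is a (degenerate) triangular 1-dissection whose outer contour is a single loop, canonically pointed at the image of the center. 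The relation $f(\sD)-1=3(f(\sE)-1)$ of the same lemma then turns $3n$ inner faces of $\sD$ into exactly $n$ inner faces of $\sE$.

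For the reverse direction I would unfold a quasi-simple pointed triangular 1-dissection $\sE$ into a 3-symmetric dissection $\sD$ by gluing three rotated copies of a fundamental sector around the pointed vertex. By construction $\sD$ is 3-symmetric and triangular, with outer degree $3\cdot 1=3$, and its three outer vertices form a free orbit of the rotation (the only fixed point is the interior center), hence are pairwise distinct, so the outer face is a genuine simple 3-cycle. Lemma~\ref{lem:quotient_quasisimple} ensures that $\sD$ is again quasi-simple, and the crucial upgrade to simplicity comes from Lemma~\ref{lem:quotient_distance}: a 3-symmetric dissection satisfies $\ell(\sD)\geq 3$, so no loop and no 2-cycle can enclose the center, while quasi-simplicity forbids any loop or 2-cycle \emph{not} enclosing it; hence $\sD$ has neither a loop nor a multiple edge and is a simple triangulation. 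As quotient and unfolding are mutually inverse on dissections, this yields the claimed one-to-one correspondence.

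It remains to identify which $n$ occur, and this is the one genuinely arithmetic point. Any triangulation has all of its $f(\sD)$ faces (inner and outer) of degree $3$, so the incidence count $2e(\sD)=3f(\sD)$ forces $f(\sD)$ to be even; consequently the number of inner faces $f(\sD)-1$ is odd, and with $f(\sD)-1=3n$ this makes $3n$, and therefore $n$, odd. Equivalently, on the quotient side $2e(\sE)=3n+1$ already imposes $n$ odd. Since conversely every positive odd $n$ is realized, the restriction to positive odd $n$ in the statement is precisely this parity obstruction, both families being empty when $n$ is even.

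The main obstacle I would expect is the bookkeeping around the degenerate outer face of the $1$-dissection: one must confirm that the loop of length one unfolds to a bona fide simple boundary triangle, and that the center is correctly realized as the unique fixed inner vertex carried by the pointing, together with the clean derivation of the parity condition. Everything else is a direct transport of Lemmas~\ref{lem:quotient_count}--\ref{lem:quotient_quasisimple} through the triangular specialization, exactly parallel to the quadrangular case of Proposition~\ref{prop:clasquoquad}.
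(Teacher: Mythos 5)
Your proof is correct and follows essentially the same route as the paper: Lemma~\ref{lem:quotient_quasisimple} gives the correspondence at the quasi-simple level, and Lemma~\ref{lem:quotient_distance} (via $\ell(\sD)\geq 3$) supplies the key upgrade from quasi-simple symmetric triangulations to simple ones. Your explicit treatment of the unfolding and of the parity constraint on $n$ merely spells out details the paper leaves implicit.
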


\section{Another quotient for symmetric simple quadrangulations and
  triangulations}\label{sec:ori}

In this section, we only consider simple quadrangulations and
triangulations, and show that a specific quotient can be defined for
these families, using their characterization in terms of
$d$-orientations.

\subsection{The minimal 2- or 3-orientation}\label{sub:ori}

An \emph{orientation} of a plane map is the choice of an orientation
for each of its \emph{inner} edges.  An orientation without
counterclockwise cycle is said to be \emph{minimal}. A $d$-orientation of a
map is an orientation such that the outdegree of each inner vertex is
equal to $d$, while outer vertices have outdegree equal to zero.

An important property of simple quadrangulations and triangulations is
that they can be characterized in terms of $d$-orientations:

\begin{proposition}[\cite{Schnyder},\cite{DeOs01},\cite{Felsner2004}]
  A quadrangulation is simple if and only if it admits a
  2-orientation.  A triangulation is simple if and only if it admits a
  3-orientation.  In both cases, there exists a unique minimal such
  orientation.
\end{proposition}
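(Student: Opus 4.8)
The plan is to recast the existence of a $d$-orientation as a feasibility question for an orientation with prescribed out-degrees, and to read off simplicity from the corresponding Hall-type condition. Fix a (quadr- or tri-)angulation \sM and let $I$ denote its set of inner vertices; we must orient the inner edges so that every vertex of $I$ has out-degree $d$ (with $d=2$ in the quadrangular case, $d=3$ in the triangular one) and every outer vertex has out-degree $0$. Viewing each edge as a unit that must be charged to one of its endpoints, the defect form of Hall's theorem (equivalently max-flow--min-cut) tells us that such an orientation exists if and only if two conditions hold: a \emph{global balance}, namely that the number of inner edges equals $d\,|I|$, and a \emph{local condition}, namely that for every $S\subseteq I$ one has $d\,|S|\le m(S)$, where $m(S)$ is the number of inner edges incident to $S$. (Out-degree $0$ at the outer vertices forces each edge joining $I$ to the outer face to be charged to its inner endpoint, so it suffices to let $S$ range over subsets of $I$.) First I would dispatch the global balance by Euler's formula: for a triangulation $e=3v-6$, whence there are exactly $3(v-3)=3|I|$ inner edges, and for a quadrangulation $e=2v-4$, whence $2(v-4)=2|I|$ inner edges; so the balance holds automatically, whether or not \sM is simple.

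The heart of the argument is to identify the local condition with simplicity. The function $S\mapsto m(S)$ is submodular, so the sets minimizing $m(S)-d\,|S|$ can be taken to be the vertex sets enclosed by a cycle of \sM; this uncrossing step reduces the problem to estimating, for a cycle \sC of length $\ell$ bounding a disk with $i$ interior vertices, the slack (number of inner edges incident to those $i$ vertices) $-\,d\,i$. We may assume \sC chordless, since a chord would yield a shorter enclosing cycle. Applying Euler's formula inside the disk (all inner faces of degree $3$, \resp $4$) gives a closed expression for this slack: it equals $\ell-3$ in the triangular case and $\tfrac{\ell}{2}-2$ in the quadrangular case. Thus the local condition can fail only across a separating cycle shorter than a facial one, that is, a separating cycle of length $2$ --- precisely a multiple edge. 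Conversely, if \sM is simple there is no such short cycle and every slack is nonnegative, so the local condition holds for all $S$ and the orientation exists. This equivalence, together with the reduction of general deficient sets to cycle interiors, is the step I expect to be the main obstacle, since it requires both the submodular uncrossing and a careful accounting of boundary edges in the disk count.

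It remains to produce a \emph{unique minimal} $d$-orientation. Here I would use the structure of the set of all $d$-orientations of a fixed simple \sM: any two of them differ, as oriented edge sets, by a symmetric difference in which every vertex has equal in- and out-degree, hence by a disjoint union of directed cycles, and reversing such a cycle preserves all out-degrees and so yields another $d$-orientation. Taking \emph{minimal} to mean ``without counterclockwise directed cycle'' (the definition used in the text), I would obtain existence by reversing clockwise cycles until none remains counterclockwise, and uniqueness as follows: if two minimal orientations differed, their symmetric difference would contain an innermost directed cycle, which is counterclockwise for one of the two orientations, a contradiction. This is exactly the distributive-lattice description of $\alpha$-orientations, which one may alternatively invoke directly to obtain the unique minimum.
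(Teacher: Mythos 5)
The proposition you were asked to prove is not actually proved in the paper: it is imported verbatim from the literature (Schnyder; de Fraysseix--Ossona de Mendez; Felsner), so there is no in-paper argument to compare yours against, and your proof has to stand on its own. In spirit it reconstructs what the cited references do: a Hall/flow feasibility criterion for orientations with prescribed out-degrees, Euler-formula counting to verify the criterion, and the cycle-reversal structure of the set of $d$-orientations for the unique minimal element. Your bookkeeping is correct where you carry it out: the global balance ($3|I|$ resp.\ $2|I|$ inner edges) is right, the slack of a chordless enclosing cycle of length $\ell$ is indeed $\ell-3$ (triangular) and $\ell/2-2$ (quadrangular), and your uniqueness argument for the minimal orientation is complete as stated: any simple directed cycle contained in the symmetric difference of two $d$-orientations is counterclockwise in exactly one of them.

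The genuine gap is the step you yourself flag: submodularity of $S\mapsto m(S)$ only gives that the minimizers of $m(S)-d|S|$ are closed under union and intersection; it does not say that a minimizer is the vertex set enclosed by a cycle. The reduction to connected $S$ is easy (the slack is additive over components of the induced subgraph), but the ``fill-in'' from a connected violating set to a cycle interior is not: adding the enclosed vertices changes $m(S)$ and $d|S|$ simultaneously, with no evident monotonicity of the slack, so planarity must enter in a way your sketch does not supply. Fortunately the uncrossing can be avoided altogether by passing to the complementary (induced-edge) form of the criterion. For $S\subseteq I$, write $T=V\setminus S$, which contains all outer vertices; using the global balance, the condition $m(S)\geq d|S|$ is equivalent to $|E[T]|\leq 3|T|-6$ in the triangular case (counting the three outer edges) and to $|E[T]|\leq 2|T|-4$ in the quadrangular case. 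These are exactly the Euler bounds satisfied by \emph{every} simple planar graph (resp.\ every simple bipartite planar graph), applied to the subgraph induced on $T$, which is simple whenever the map is, and bipartite in the quadrangular case. Hence simplicity implies the local condition for all $S$ at once, with no reduction to cycles needed. For the converse direction you should take an \emph{innermost} $2$-cycle (or loop, in the triangular case, which your ``length $2$'' statement omits): it is automatically chordless, and it must enclose at least one vertex, since otherwise its interior would be a face of degree $<3$; your slack formula then exhibits a violating set. Finally, ``reverse counterclockwise cycles until none remain'' needs a termination argument (a potential function, or the distributive-lattice theorem of Felsner, which is among the proposition's citations and is legitimate to invoke); only the uniqueness half of that last step is fully proved in your text.
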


Figure~\ref{fig:symquad} and~\ref{fig:symtrig} show two examples of a
simple quadrangulation and a simple triangulation respectively endowed
with their minimal 2- or 3-orientation.

Now let \sM be a $k$-symmetric map endowed with a $d$-orientation \sO;
then the image of \sO by the $2\pi/k$-rotation is clearly a
$d$-orientation, and it is minimal if \sO is minimal. Hence:

\begin{lemma}
  The unique minimal 2-orientation (\resp 3-orientation) of a
  symmetric simple quadrangulation (\resp triangulation) is itself
  symmetric.
\end{lemma}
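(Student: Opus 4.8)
The plan is to combine the uniqueness clause of the cited Proposition with the observation recorded in the paragraph immediately preceding the statement. Let $\rho$ denote the $2\pi/k$-rotation about the center of the symmetric map, with $k=2$ in the quadrangulation case and $k=3$ in the triangulation case, and let $d$ be the corresponding outdegree ($d=2$, \resp $d=3$). By the Proposition, \sM carries a \emph{unique} minimal $d$-orientation \sO, so it suffices to show that $\rho(\sO)$ is again a minimal $d$-orientation: uniqueness then forces $\rho(\sO)=\sO$, which is exactly the assertion that \sO is symmetric.

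First I would check that $\rho(\sO)$ is a $d$-orientation. Since $\rho$ is an automorphism of the map, it sends inner edges to inner edges and outer vertices to outer vertices, and it preserves the outdegree at every vertex; hence the defining outdegree conditions of a $d$-orientation are preserved. Next I would verify that minimality is preserved. Here the essential point is that $\rho$, being a rotation, is an orientation-preserving homeomorphism of the plane, so it carries each counterclockwise cycle to a counterclockwise cycle; consequently an orientation free of counterclockwise cycles is sent to one free of counterclockwise cycles. Thus $\rho(\sO)$ is minimal, and the conclusion follows as above.

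The only delicate step is this last one: orientation-preservation of $\rho$ is precisely what guarantees that minimality survives the symmetry. A reflection would instead swap the two cyclic senses and would \emph{not} preserve minimality, so the restriction to rotations is used in an essential way. Since everything else is a direct transport of structure along the automorphism $\rho$, I expect no further obstacle.
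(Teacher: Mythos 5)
Your proposal is correct and is exactly the paper's argument: the paper observes that the image of a minimal $d$-orientation under the $2\pi/k$-rotation is again a minimal $d$-orientation, and then invokes uniqueness to conclude symmetry. Your write-up merely makes explicit the two transport-of-structure checks (outdegree conditions and preservation of counterclockwise cycles under an orientation-preserving rotation) that the paper dismisses as ``clearly'', so there is nothing to add.
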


\subsection{Leftmost paths}

Let \sM be a plane map endowed with an orientation.  For each inner
vertex $u$ of \sM, a \emph{leftmost path starting at $u$} is a maximal
oriented path \sP starting at $u$ such that for any triple $v,v',v''$
of successive vertices along \sP, $(v',v'')$ is the first outgoing
edge after $(v,v')$ in clockwise order around $v'$.

\begin{lemma}\label{lem:leftmostpath}
  If \sM is a simple quadrangulation or triangulation endowed with its
  minimal 2- or 3- orientation, \sP is necessarily a simple path
  ending at one outer vertex of~\sM.
\end{lemma}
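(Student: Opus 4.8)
The plan is to establish two facts: that \sP can only become maximal at an outer vertex, and that \sP never repeats a vertex. Since \sM is finite, the two together force \sP to be a finite simple path ending at an outer vertex. Termination is the easy part: in a $d$-orientation every inner vertex has outdegree $d\ge 2>0$, so whenever \sP reaches an inner vertex there is at least one outgoing edge and the leftmost rule selects a successor; hence \sP can be maximal only at a vertex with no outgoing edge, that is an outer vertex (outdegree $0$). Thus if \sP is finite it ends at an outer vertex, and the only alternative to termination is that \sP eventually repeats a vertex.

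So the heart of the matter is to exclude cycles. Suppose \sP is not simple and let $w_j=w_i$ (with $i<j$) be the first repetition, so that $w_0,\dots,w_{j-1}$ are distinct and $C=(w_i\to w_{i+1}\to\cdots\to w_{j-1}\to w_i)$ is a simple directed cycle, carrying a simple tail $w_0,\dots,w_i$ that meets $C$ only at $w_i$. Let $\overline R$ be the submap formed by $C$ together with everything enclosed on its bounded side (which does not contain the outer face of \sM), and write $\ell=|C|$ for its boundary length, $V_{\mathrm{int}}$ for the number of vertices strictly inside, $f_{\mathrm{int}}$ for the number of interior faces (each an inner face of \sM, hence of degree $g$, with $g=4$ for quadrangulations and $g=3$ for triangulations), and $E_R$ for its edge count. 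All vertices of $\overline R$ are inner vertices of \sM and all its edges are oriented inner edges. I would then split on the orientation of $C$. If $C$ is counterclockwise this directly contradicts the minimality of the orientation, and we are done.

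It remains to rule out that $C$ is clockwise, and this is where the leftmost rule is used. At an internal vertex $w_m$ of $C$ the outgoing edge $(w_m,w_{m+1})$ is the first outgoing edge met turning clockwise from the incoming edge $(w_{m-1},w_m)$, so the clockwise angular sector between the two cycle-edges contains only incoming edges; for a clockwise cycle this sector is precisely the exterior one, whence every outgoing edge at $w_m$ other than the cycle-edge points into $\overline R$. Granting the same property at the closing vertex $w_i$ (discussed below), every edge with tail in $\overline R$ also has its head in $\overline R$, so $\sum_{v\in\overline R}\mathrm{outdeg}(v)=E_R$; since all vertices are inner this reads $d\,(V_{\mathrm{int}}+\ell)=E_R$. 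Combining this with Euler's relation $E_R=V_{\mathrm{int}}+\ell+f_{\mathrm{int}}-1$ and the face-degree count $g\,f_{\mathrm{int}}+\ell=2E_R$, and substituting $(d,g)=(2,4)$ \resp $(3,3)$, one is led to $\ell=-4$ \resp $\ell=-3$, an absurdity. Hence $C$ cannot be clockwise either, so no cycle exists and \sP is simple.

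The step I expect to be the crux is the phrase ``granting the same at $w_i$''. The closing vertex carries three distinguished edges — the two cycle-edges and the tail-edge $(w_{i-1},w_i)$ — and the leftmost property there is phrased relative to the tail-edge rather than to the incoming cycle-edge $(w_{j-1},w_i)$. The clean case is when the tail reaches $w_i$ from inside $\overline R$: then the clockwise sweep from the tail-edge to $(w_i,w_{i+1})$ runs through the entire exterior sector, so the leftmost property again certifies that no outgoing edge leaves $\overline R$ at $w_i$. The genuinely delicate point is therefore to show that, in the clockwise case, the tail cannot approach $C$ from the outside; I would settle this by a turning (winding) argument reflecting that a leftmost walk turns left at each step and so closes its first loop on the interior side, or, failing that, by exhibiting a counterclockwise cycle in the offending configuration and invoking minimality once more.
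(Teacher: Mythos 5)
Your overall strategy coincides with the paper's: termination at an outer vertex because inner vertices have outdegree $d\geq 2$, minimality to kill counterclockwise cycles, and the leftmost rule combined with an Euler/face-degree count to kill clockwise ones. But there is a genuine gap, and it is exactly the point you flag yourself. Your count requires the \emph{equality} $\sum_{v\in\overline R}\mathrm{outdeg}(v)=E_R$, i.e.\ $d\,(V_{\mathrm{int}}+\ell)=E_R$, hence you need \emph{every} outgoing edge at the closing vertex $w_i$ to stay inside $\overline R$; this is only justified when the tail reaches $w_i$ from inside. For the tail-from-outside configuration you offer two sketches, neither of which is a proof: the ``turns left at each step'' heuristic does not translate into a winding-number statement (the leftmost rule constrains the turn only relative to the \emph{available} outgoing edges, and closing a clockwise cycle is not in any evident tension with it), and no counterclockwise cycle is exhibited in that configuration. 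In addition, your case analysis silently assumes $i\geq 1$: if the first repeated vertex is the starting vertex itself ($i=0$), there is no tail edge, the leftmost rule imposes no constraint whatsoever at $w_0$, and neither of your two cases applies.

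The gap is real but entirely avoidable, and the paper's proof shows how: the counting has slack, so the strong property at the closing vertex is not needed. At $w_i$ you know at least that the outgoing cycle edge $(w_i,w_{i+1})$ lies in $\overline R$, while at every other vertex of $\overline R$ all $d$ outgoing edges lie in $\overline R$ (for the internal cycle vertices this is your exterior-sector argument, which is sound and independent of the tail). Since distinct outgoing edge-ends give distinct edges, this yields the inequality $E_R\geq d\,(V_{\mathrm{int}}+\ell)-(d-1)$, which is the paper's $e(\sR)\geq 2v(\sR)-1$ (\resp $3v(\sR)-2$). Rerunning your own computation with this inequality in place of the equality gives $\ell\leq -2$ (\resp $\ell\leq -1$) instead of $\ell=-4$ (\resp $\ell=-3$), still absurd. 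This single relaxation covers uniformly all three situations --- tail from inside, tail from outside, and no tail at all --- so the ``genuinely delicate point'' you isolate never needs to be resolved.
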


\begin{proof}
  Assume by contradiction that \sP is self intersecting and let
  $v_0,v_1,\ldots, v_l=v$ be a sequence of vertices of \sP explored
  successively in this order and forming a cycle. Since the
  orientation \sO of \sM is chosen to be minimal, this cycle is
  necessarily clockwise. Hence, since \sP is a left-most path, the
  outgoing edges of each vertex of the cycle lie in the interior
  region of the cycle (except eventually the other outgoing edges of
  $v_0$, if $v_0$ is the starting point of the path), hence denoting
  by \sR the enclosed region, $e(\sR)$ is at least $2v(\sR)-1$ (\resp
  $3v(\sR)-2$). Euler relation leads to a contradiction: if the length
  of the cycle is $\ell$, Euler formula implies that $2v(\sR) = e(\sR)
  + 2 + \ell/2$ (\resp $3v(\sR) = e(\sR) + 2 + \ell$). Note that this
  is in accordance with the well-known property that inner edges of a
  simple triangulation (\resp quadrangulation) can be partitionned
  into three (\resp two) spanning trees (see~\cite{Schnyder}).
\end{proof}

In the next two subsections, we use leftmost paths starting at the
center to decompose symmetric simple quadrangulations and
triangulations in 2 (\resp 3) sectors, and show how this particular
sector can be sticked together again in a non-classical way to obtain
a simple quadrangulation or triangulation.

\subsection{A new way of quotienting symmetric simple quadrangulations}

\begin{figure}[t]
  \centering
  \def\etalonminipage{.25\linewidth}
  \def\etalon{10em}
  \begin{minipage}[c]{.3\linewidth} \centering
    \subfigure[\scriptsize Symmetric simple quadrangulation]{\qquad
      \includegraphics[height=\etalon, page=1]{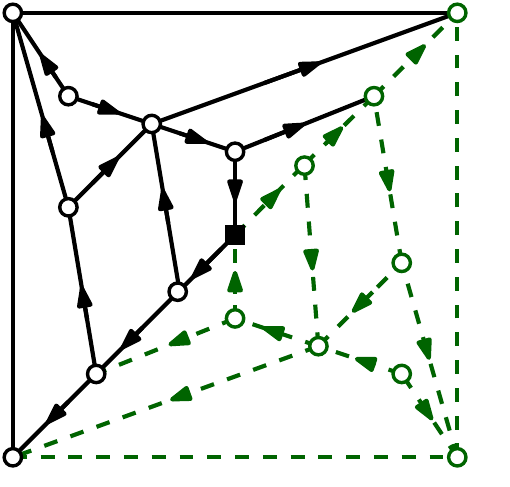}\quad
      \label{fig:symquad}
    }
  \end{minipage}
  \begin{minipage}[c]{3em}\centering
    $\nearrow$
    \vskip5em
    $\searrow$
  \end{minipage}
  \begin{minipage}[c]{\etalonminipage}\centering
    \subfigure[\scriptsize Classical identification]{
      \includegraphics[height=\etalon, page=3]{Quotient}
      \label{fig:ident-classique}
    }\\
    \subfigure[\scriptsize New identification]{
      \includegraphics[height=\etalon, page=2]{Quotient}
      \label{fig:idnew}
    }
  \end{minipage}
  \begin{minipage}[c]{3em}\centering
    $\longrightarrow$
    \vskip10em
    $\longrightarrow$
  \end{minipage}
  \begin{minipage}[c]{\etalonminipage}\centering
    \subfigure[\scriptsize Classical quotient]{
      \includegraphics[height=\etalon, page=5]{Quotient}
      \label{fig:quoclassique}
    }\\
    \subfigure[\scriptsize New quotient]{
      \includegraphics[height=\etalon,page=4]{Quotient}
      \label{fig:quonew}
    }
  \end{minipage}
  \label{fig:subfigureExample}
  \caption{Classical 2-quotient
    \subref{fig:ident-classique}, \subref{fig:quoclassique} and the
    new quotient \subref{fig:idnew}, \subref{fig:quonew} of a symmetric simple quadrangulation endowed
    with its minimal 2-orientation \subref{fig:symquad}.}
\end{figure}

Let \sQ be a symmetric simple quadrangulation, $u$ the central vertex,
$e_1$, $e_2$ its two outgoing edges, and $P_1=(u=v_0,v_1,\ldots,v_p)$,
$P_2=(u=w_0,w_1,\ldots,w_p)$ the leftmost paths of $e_1$ and $e_2$
respectively. Clearly $P_1$ and $P_2$ map to one another by the
$\pi$-rotation.  Hence $P_1$ and $P_2$ cannot meet except at their
starting point $u$, otherwise they would meet twice, which would imply
the existence of an oriented cycle without outgoing edges, leading to
the same contradiction as in Lemma~\ref{lem:leftmostpath}.

Let us cut \sQ along $P_1\cup P_2$ to split \sQ into two isomorphic
dissections, see Figure~\ref{fig:idnew}, and define $\sQ_1$ as the one
with clockwise contour $u, v_1, v_2,\dots, w_1$. If $\sQ_1$ is a
quadrangulation, we set $\Phi(\sQ):=\sQ_1$ and mark the edge
$(u,v_1)$. Otherwise, for any $i\leq p-2$, we identify in $\sQ_1$
vertices $v_{i+2}$ with $w_i$, and merge corresponding edges; this
defines the map $\Phi(\sQ)$, in which we then mark the edge $(v_1,v_2)$.

Concerning orientations, the identification of $v_{i+2}$ with $w_i$
creates an orientation conflict only when merging $(u,v_1)$ with
$(v_1,v_2)$. We choose to orient the merged edge from $v_1$ to $v_2$.
With this convention, $\Phi(\sQ)$ is naturally endowed with its minimal
$2$-orientation and the leftmost path of the marked edge $(v_1,v_2)$
is $(v_1,v_2,\ldots,v_p)$ (to justify that the $2$-orientation of $\Phi(\sQ)$ thus obtained
is minimal, one just has to observe that no oriented path goes from some vertex $w_i$
to some vertex $v_j$, hence when doing the identifications of vertices 
no counterclockwise circuit is created).   
It is then easy to describe the inverse
mapping 
and to obtain:

\begin{theorem}\label{th:bij}
  The mapping $\Phi$ is a one-to-one correspondence between symmetric
  simple quadrangulations with $2n$ inner faces and simple
  quadrangulations with $n$ inner faces and a marked edge.
\end{theorem}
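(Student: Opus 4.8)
The plan is to show that $\Phi$ is a well-defined map whose image lies in the target family, and then to construct an explicit inverse. The key observation driving everything is that cutting \sQ along $P_1\cup P_2$ produces two isomorphic pieces (exchanged by the $\pi$-rotation), so that the combinatorics of the half $\sQ_1$ completely determine \sQ. First I would carefully record the boundary data of $\sQ_1$: its clockwise contour is $u,v_1,\dots,v_p,w_{p-1},\dots,w_1$ (closing back to $u$), where the $v$-side comes from $P_1$ and the $w$-side from $P_2$. Since $P_1$ and $P_2$ meet only at $u$ (as argued just before the statement, using minimality and Lemma~\ref{lem:leftmostpath}), this contour is a genuine cycle and $\sQ_1$ is a dissection with all \emph{inner} faces of degree $4$ and exactly $n$ of them by Lemma~\ref{lem:quotient_count}.

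Next I would verify that the identification $v_{i+2}\equiv w_i$ (for $i\leq p-2$) zips up the two boundary paths correctly into a quadrangulation. The content to check is twofold: that the glued object has no degenerate faces and that its inner faces all stay of degree $4$. Here the crucial input is the symmetry: because $P_2$ is the $\pi$-image of $P_1$, the edge $(w_i,w_{i+1})$ is identified with $(v_{i+2},v_{i+3})$, so the boundary zipper is consistent and the faces straddling $P_1\cup P_2$ in \sQ are reassembled as honest quadrangular faces in $\Phi(\sQ)$. The lone mismatch is the edge incident to the center, which is why the merge of $(u,v_1)$ with $(v_1,v_2)$ must be treated separately and why the marked edge is placed at $(v_1,v_2)$; the orientation convention stated in the excerpt resolves the single orientation conflict there. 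I would also confirm simplicity, \ie\ that no multiple edge is created: a double edge in $\Phi(\sQ)$ would pull back to a short enclosing cycle in \sQ, contradicting that \sQ is simple together with Lemma~\ref{lem:quotient_distance} (which forbids cycles of length $<2$ enclosing the center). The remark that the resulting $2$-orientation is minimal, justified in the excerpt by the absence of oriented paths from any $w_i$ to any $v_j$, certifies that $\Phi(\sQ)$ carries \emph{its} canonical minimal $2$-orientation, so the image is unambiguously a marked simple quadrangulation.

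For the reverse direction I would start from a simple quadrangulation $M$ with $n$ inner faces carrying its unique minimal $2$-orientation and a marked edge $f$. Reading off the leftmost path of $f$ gives, by Lemma~\ref{lem:leftmostpath}, a simple oriented path reaching the outer boundary; this path plays the role of $(v_1,v_2,\dots,v_p)$. I would then \emph{unzip} $M$ along this path, duplicating it into a $v$-copy and a $w$-copy, reintroduce the center $u$ with its two edges $e_1,e_2$, and glue two $\pi$-symmetric copies of the resulting piece to reconstruct a symmetric simple quadrangulation. The only subtlety is to recover whether the original \sQ fell into the special case where $\sQ_1$ was already a quadrangulation (so that no identification was performed and the marked edge was $(u,v_1)$); this is read off from the position of the marked edge relative to the outer boundary, so the two cases of the definition of $\Phi$ are disjoint and jointly exhaustive. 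Checking that this construction is two-sided inverse to $\Phi$ is then a direct bookkeeping verification, and the count $2n\leftrightarrow n$ follows from Lemma~\ref{lem:quotient_count}.

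The main obstacle I anticipate is \emph{not} the bijection bookkeeping but the verification that the zipped map is both a legitimate quadrangulation and \emph{simple} after the identifications. One must rule out that gluing $v_{i+2}$ to $w_i$ accidentally creates a multiple edge or collapses a face, and the cleanest way to do this is to transport any putative bad configuration back through the $\pi$-symmetry to a forbidden short cycle enclosing the center of \sQ, invoking Lemma~\ref{lem:quotient_distance}. The orientation minimality, while stated as ``easy'' in the excerpt, also deserves care: I would argue that any counterclockwise circuit in $\Phi(\sQ)$ must cross the glued path and hence lift to a counterclockwise circuit in \sQ, contradicting minimality of \sQ's orientation. Once these two structural facts are nailed down, the correspondence and its inverse are formal.
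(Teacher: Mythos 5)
Your overall route is the same as the paper's (the construction of $\Phi$ is given, your inverse via the leftmost path of the marked edge and the case distinction according to whether the marked edge lies on the outer boundary are exactly what the paper intends), but the two verifications that you yourself single out as the crux of the proof are both settled by arguments that do not work. For simplicity of $\Phi(\sQ)$, your ``cleanest way'' --- pulling a putative multiple edge back through the $\pi$-symmetry to a short cycle enclosing the center, forbidden by Lemma~\ref{lem:quotient_distance} --- fails precisely because the identification is \emph{shifted} ($v_{i+2}\equiv w_i$, not $v_i\equiv w_i$). A double edge of $\Phi(\sQ)$ typically pulls back to a sector vertex $x$ joined by one edge to $v_{i+2}$ and by another to $w_i$; completing by the rotation $\rho$ and by pieces of $P_1$ and $P_2$ gives the enclosing closed walk $x,v_{i+2},v_{i+1},v_i,\rho(x),w_{i+2},w_{i+1},w_i$ of length $8$ (similarly, an edge $(v_{i+2},w_{i+1})$ of the sector would become parallel to a seam edge and pulls back to an enclosing $4$-cycle). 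Since for $k=2$ the lemma only excludes enclosing cycles of length less than $4$ (see Remark~\ref{rk:encloses}; not ``less than $2$'' as you write), none of these configurations contradicts it, nor the simplicity of \sQ. What actually proves simplicity is the fact you treat as a side remark: with the convention that the merged edge is oriented away from $v_1$, every inner vertex of $\Phi(\sQ)$ has outdegree exactly $2$ (this convention exists precisely to repair the outdegree of the merged vertex $u\equiv v_2$, which would otherwise be $3$, while $v_1$ would drop to $1$) and every outer vertex has outdegree $0$; so $\Phi(\sQ)$ admits a $2$-orientation and is therefore simple by the proposition recalled in Section~\ref{sub:ori}. That outdegree verification is the step that must be written out, and your proposal never does it.

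The minimality argument has the same defect. Your claim that ``any counterclockwise circuit in $\Phi(\sQ)$ must cross the glued path and hence lift to a counterclockwise circuit in \sQ'' is backwards: a circuit \emph{avoiding} the seam does lift (to $\sQ_1\subset\sQ$) and is clockwise by minimality of the orientation of \sQ, whereas a circuit \emph{meeting} the seam does not lift to a closed circuit at all --- its preimage is a union of oriented paths broken at the merged vertices, and there is no covering-type unfolding into \sQ because of the shift. This is exactly the case for which the paper's observation is needed: in $\sQ_1$ each $w_i$ has its unique outgoing edge along $P_2$, so no oriented path runs from any $w_i$ to any $v_j$; consequently an oriented circuit can traverse the seam only from the $w$-side to the $v$-side, and a simple circuit doing so must wind clockwise around $v_1$. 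Without this one-way-crossing fact your case analysis has a hole precisely where the claim is nontrivial. And minimality is not cosmetic here: it is what guarantees that the leftmost path of the marked edge, computed in the canonical minimal $2$-orientation of $\Phi(\sQ)$, is the seam, which is what makes your (and the paper's) inverse map well defined.
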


\subsection{A new way of quotienting symmetric simple triangulations}

\begin{figure}[t]
\centering
\def\etalon{12em}
\begin{minipage}[c]{.32\linewidth}
  \subfigure[\scriptsize Symmetric simple triangulation]{\quad
    \includegraphics[height=\etalon,page=1]{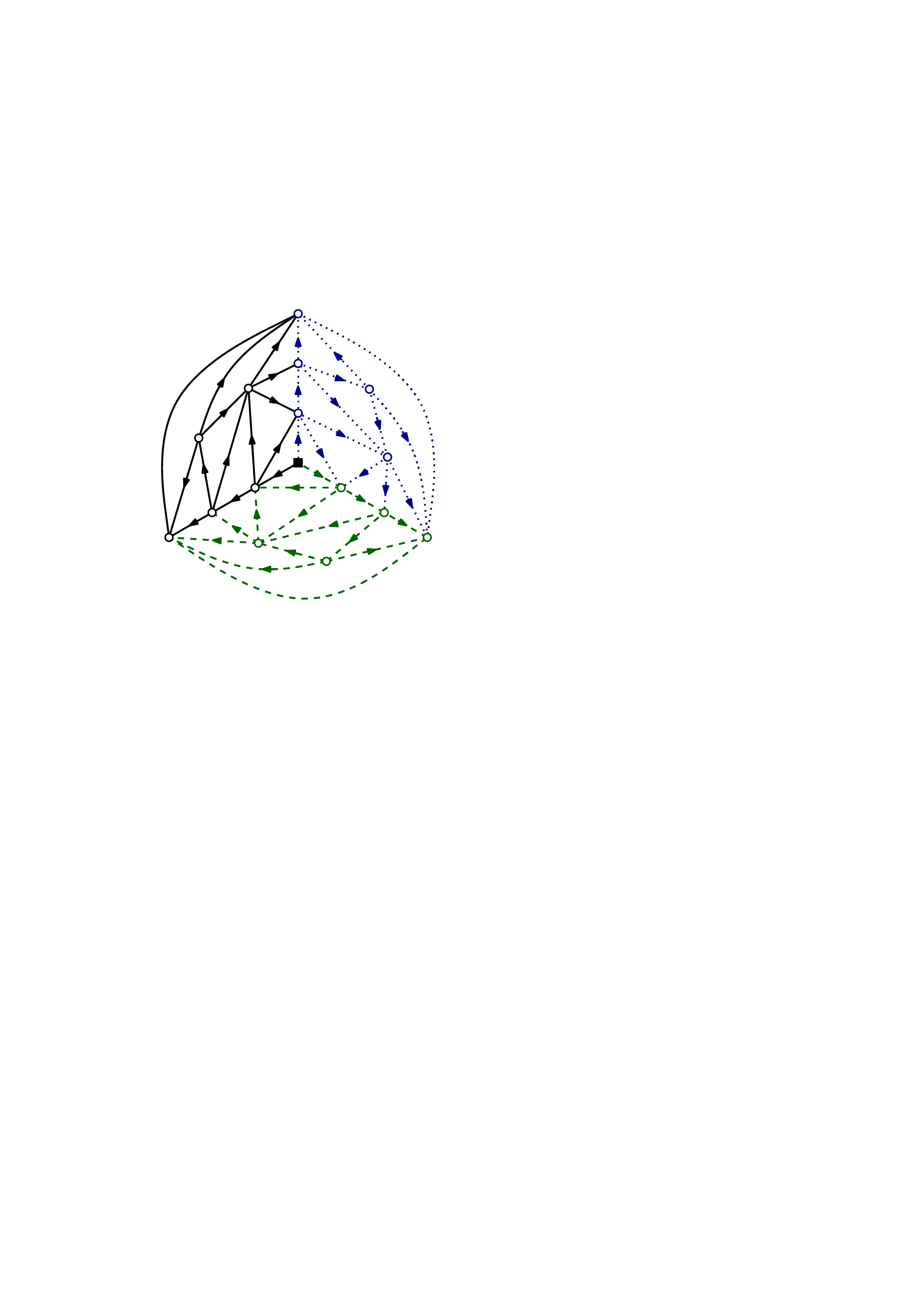}\quad
    \label{fig:symtrig}
  }
\end{minipage}
\begin{minipage}[c]{2em}\centering
  $\nearrow$  \vskip5em  $\searrow$
\end{minipage}
\begin{minipage}[c]{.25\linewidth}
  \subfigure[\scriptsize Classical identification]{\quad
    \includegraphics[height=\etalon,page=3]{QuotientTrig.pdf}
    \label{fig:ident-classique_trig}\hspace{-2em}
  }\\
  \subfigure[\scriptsize New identification]{\quad
    \includegraphics[height=\etalon,page=2]{QuotientTrig.pdf}\
    \label{fig:idnew_trig}\hspace{-2em}
  }
\end{minipage}
\begin{minipage}[c]{2em}\centering
  $\longrightarrow$  \vskip10em  $\longrightarrow$
\end{minipage}\hspace{-1em}
\begin{minipage}[c]{.25\linewidth}
  \subfigure[\scriptsize Classical 3-quotient]{
    \includegraphics[height=\etalon,page=5]{QuotientTrig.pdf}
    \label{fig:quoclassique_trig}
  }\\
  \subfigure[\scriptsize New quotient]{\qquad
    \includegraphics[height=\etalon,page=4]{QuotientTrig.pdf}
    \label{fig:quonew_trig}
  }
\end{minipage}

\label{fig:subfigureExample_trig}
\caption{Classical 3-quotient
  \subref{fig:ident-classique_trig}, \subref{fig:quoclassique_trig} and the
  new quotient \subref{fig:idnew_trig}, \subref{fig:quonew_trig} of a symmetric
simple triangulation endowed
  with its minimal 3-orientation \subref{fig:symtrig}.}
\end{figure}

Let \sT be a simple symmetric triangulation endowed with its minimal
3-orientation. Let $u$ be its central vertex, $e_1$, $e_2$ and $e_3$
its three outgoing edges, and $\sP_1=(u=v_0,v_1,\ldots,v_p)$,
$\sP_2=(u=w_0,w_1,\ldots,w_p)$ and $\sP_3(u=x_0,x_1,\ldots,x_p)$ the
leftmost paths of $e_1$, $e_2$ and $e_3$ respectively. Clearly $\sP_1$,
$\sP_2$ and $\sP_3$ maps respectively onto $\sP_2$, $\sP_3$ and $\sP_1$ by the
$2\pi/3$-rotation centered at $u$.  Hence they cannot meet 
except at their starting point $u$.  Let us cut \sT along $\sP_1\cup
\sP_2\cup \sP_3$ to split \sT into three isomorphic dissections, see
Figure~\ref{fig:idnew_trig}, and define $\sT_1$ as the one with clockwise
contour $u, v_1, v_2,\dots, w_1$. If $\sT_1$ is a triangulation, we set
$\Phit(\sT):=\sT_1$ and mark the edge $(u,v_1)$. Otherwise, for any $i\leq
p-2$, we identify in $\sT_1$ vertices $v_{i+2}$ with $w_i$, and merge
corresponding edges; this defines the map $\Phit(\sT)$, in which we then
mark the edge $(v_1,v_2)$.

Concerning orientations, the identification of $v_{i+2}$ with $w_i$
creates an orientation conflict only when merging $(u,v_1)$ with
$(v_1,v_2)$. We choose to orient the merged edge from $v_1$ to $v_2$.
With this convention, $\Phi(Q)$ is naturally endowed with its minimal
$3$-orientation and the leftmost path of the marked edge $(v_1,v_2)$
is $(v_1,v_2,\ldots,v_p)$. It is then easy to describe the inverse
mapping and to obtain:

\begin{theorem}\label{thm:bijtrig}
  The mapping $\Phit $ is a one-to-one correspondence between
  symmetric simple triangulations with $3n$ inner faces and simple
  triangulations with $n$ inner faces and one marked edge, for any
  positive odd integer~$n$.
\end{theorem}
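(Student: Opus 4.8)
The plan is to establish Theorem~\ref{thm:bijtrig} by verifying that the map $\Phit$ described just above is well-defined, lands in the stated target set, and admits an explicit inverse, entirely parallel to the quadrangulation case (Theorem~\ref{th:bij}) but with three sectors instead of two. First I would make precise the decomposition of \sT along $\sP_1\cup\sP_2\cup\sP_3$. By Lemma~\ref{lem:leftmostpath}, applied to the minimal $3$-orientation of the simple triangulation \sT, each $\sP_i$ is a simple oriented path ending at an outer vertex; since the three paths are exchanged cyclically by the $2\pi/3$-rotation and all start at the central vertex $u$, they can share no vertex other than $u$ (a second common point would, as in the quadrangulation argument, force an oriented cycle with no outgoing edge inside it, contradicting minimality via the Euler-relation count of Lemma~\ref{lem:leftmostpath}). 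Hence cutting along their union genuinely splits \sT into three isomorphic dissections $\sT_1,\sT_2,\sT_3$, and I would fix $\sT_1$ with clockwise contour $u,v_1,\dots,w_1$ as in the statement.

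Next I would check that $\Phit(\sT)$ is a simple triangulation with $n$ inner faces carrying a marked edge. Each sector has exactly $n$ inner triangular faces since the $3n$ faces of \sT are partitioned equally by the three symmetric sectors (the cut runs along edges, creating no new faces). When $\sT_1$ is already a triangulation the boundary identification $v_{i+2}\equiv w_i$ is vacuous and $\Phit(\sT)=\sT_1$ with the marked root $(u,v_1)$; otherwise the zipping of the two boundary paths $(v_1,\dots,v_p)$ and $(w_1,\dots,w_p)$ along the prescribed identifications turns the two boundary sides into interior edges, and I would verify face-by-face that the resulting inner faces remain triangles and that no multiple edge is created, so the result is simple. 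The orientation bookkeeping is exactly as stated: the only conflict is at the merge of $(u,v_1)$ with $(v_1,v_2)$, resolved by orienting toward $v_2$, and minimality of the induced $3$-orientation follows because no oriented path runs from a $w_i$ to a $v_j$, so the identifications create no counterclockwise circuit. Here the outdegree condition must be rechecked at the glued vertices: $u$ loses outgoing edges to the cut, but in $\Phit(\sT)$ the relevant vertex inherits the correct outdegree $3$ from the combined contributions of the two identified boundary copies.

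To finish I would construct the inverse explicitly, which is where the bijection really lives. Given a simple triangulation $M$ with $n$ inner faces and marked edge, endow it with its unique minimal $3$-orientation (Proposition of Section~\ref{sub:ori}); the marked edge determines a distinguished leftmost path, and I would read off the boundary path $(v_1,v_2,\dots,v_p)$ from it, reconstruct the second boundary copy $(w_1,\dots,w_p)$ by the inverse unzipping $w_i\equiv v_{i+2}$, reattach the cone at a fresh center $u$, and take three rotated copies glued around $u$ to recover a $3$-symmetric candidate. The two cases (marked edge incident to the would-be center versus not) correspond precisely to the two cases in the forward definition, so they are distinguished by a local condition on $M$ that I would state. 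I would then confirm that this candidate is genuinely $3$-symmetric, simple (using that its center satisfies $\ell\geq 3$ by Lemma~\ref{lem:quotient_distance}, so quasi-simplicity upgrades to simplicity), and that its minimal $3$-orientation is the symmetric one, so that applying $\Phit$ returns $M$.

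The parity restriction to odd $n$ and the factor-of-three face count are the structural constraints I would pin down carefully, since $3n$ must be the total inner-face count of a symmetric triangulation and Proposition~\ref{prop:quoclastrig} already signals that $n$ is forced to be odd; I would trace where oddness enters (it comes from the interplay between the outer triangle of degree $3$, which forbids a $3$-symmetric structure for even $n$ via Lemma~\ref{lem:quotient_count}). The main obstacle I expect is not any single computation but the verification that the zipping operation preserves the triangular face structure and simplicity simultaneously near the three-fold meeting point at the center, where the three sectors come together and the local combinatorics is most delicate; controlling the possible short cycles there, and confirming the reconstructed center has the correct local degree and orientation, is the crux on which the whole correspondence rests.
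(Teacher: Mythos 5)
Your plan takes essentially the same route as the paper's proof: cut along the three leftmost paths from the center (pairwise disjoint except at $u$ by Lemma~\ref{lem:leftmostpath} and the rotational symmetry), zip one sector via the identifications $v_{i+2}\equiv w_i$ with the single orientation conflict resolved toward $v_2$, and invert using the fact that the image carries its minimal $3$-orientation in which the leftmost path of the marked edge is precisely the cut path --- this is exactly the construction the paper gives, which itself leaves the inverse as ``easy to describe''. One minor correction: the two cases of the inverse are distinguished by whether the marked edge is an outer or an inner edge of the triangulation (in both cases it is incident to the image of the center, since $v_2$ gets identified with $u$), not by incidence to the would-be center as your parenthetical suggests.
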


\section{Simple quadrangulations and triangulations via symmetric
  ones}\label{sec:equation}

\subsection{Getting a functional equation for simple quadrangulations}

Proposition~\ref{prop:clasquoquad} and Theorem~\ref{th:bij} describe
two bijections between symmetric simple quadrangulations and two other
families that are hence also in one-to-one correspondence:
quasi-simple pointed 2-dissections and simple quadrangulations with a
marked edge. Because 2-dissections are bipartite, each pointed
2-dissection corresponds to two different rooted pointed
2-dissections. Similarly, any edge of a simple quadrangulation has an
implicit orientation given by its minimal 2-orientation, hence a
simple quadrangulation with a marked edge corresponds to two distincts
quadrangulations with a marked oriented edge, that can be seen as
rooted simple quadrangulations with a marked face (possibly the outer
one). Hence we obtain:

\begin{corollary}\label{cor}
  Rooted simple quadrangulations with $n$ inner faces and a marked
  face are in one-to-one correspondence with rooted quasi-simple
  pointed 2-dissections with $n$ inner faces.
\end{corollary}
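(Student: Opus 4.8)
The plan is to obtain the statement as a bookkeeping consequence of the two bijections already at hand, by tracking carefully the effect of rooting on each side. First I would compose Proposition~\ref{prop:clasquoquad} with Theorem~\ref{th:bij}: both describe symmetric simple quadrangulations with $2n$ inner faces, so their composition yields a bijection $\Psi$ between quasi-simple pointed $2$-dissections with $n$ inner faces and simple quadrangulations with $n$ inner faces carrying a marked edge. Everything then reduces to showing that passing from these two \emph{unrooted} families to their rooted counterparts multiplies the number of objects by exactly two on each side, and that the two factors match through $\Psi$.

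On the dissection side, the outer face of a quasi-simple pointed $2$-dissection is a $2$-gon, hence has exactly two boundary edges; each of them can be taken as root edge with a single admissible orientation (outer face on the left), so there are a priori two rootings. I would argue that these two rooted objects are always distinct, using bipartiteness: since the dissection is connected and bipartite, its $2$-colouring is unique once the colour of the marked central vertex is fixed, so the colouring is a canonical invariant preserved by every automorphism fixing that vertex. The two boundary vertices of the $2$-gon are adjacent, hence receive distinct colours, so no such automorphism can exchange them; consequently the two rootings are non-isomorphic and rooting the dissection side is an exact doubling operation.

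On the quadrangulation side, I would use the minimal $2$-orientation to equip the marked edge with a canonical direction, so that the two orientations of the marked edge (agreeing with or reversing this canonical one) again produce exactly two objects, namely simple quadrangulations with a marked \emph{oriented} edge. It then remains to reinterpret such an object as a rooted simple quadrangulation with a marked face: the face to the left of the oriented marked edge is declared the marked face (possibly the outer one), and the orientation determines the root --- for instance via the leftmost path of Lemma~\ref{lem:leftmostpath}, which canonically reaches the outer boundary from any inner vertex. A cardinality check confirms consistency: a simple quadrangulation with $n$ inner faces has $2n+2$ edges and $n+1$ faces, so per map there are $2(2n+2)$ marked-oriented-edge objects and $4(n+1)$ rooted-with-marked-face objects, both equal to $4n+4$.

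Finally I would glue the pieces: both rooted families are exact doubles of families placed in bijection by $\Psi$, and the natural matching of the two fibres is inherited from the central symmetry of the underlying symmetric simple quadrangulation, whose two edges outgoing from the centre are exchanged by the $\pi$-rotation and govern both the choice of root on the dissection and the orientation of the marked edge. The main obstacle I anticipate is precisely this reinterpretation step on the quadrangulation side: turning a marked oriented edge into a genuine root together with a marked face requires specifying explicitly how the root is located, and verifying bijectively the degenerate case in which the marked edge lies on the outer boundary and the marked face is the outer face.
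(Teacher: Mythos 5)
Your proposal follows essentially the same route as the paper: compose Proposition~\ref{prop:clasquoquad} with Theorem~\ref{th:bij}, then double each side of the resulting correspondence, using bipartiteness of $2$-dissections to see that the two rootings of a pointed $2$-dissection are distinct, and the canonical minimal $2$-orientation to see that the two orientations of the marked edge give distinct objects, finally reinterpreting quadrangulations with a marked oriented edge as rooted quadrangulations with a marked face (possibly the outer one). The paper is in fact terser than you are on this last reinterpretation step (it simply asserts it), so your cardinality check $2(2n+2)=4(n+1)$ and your explicit $2$-colouring argument only supply details consistent with the published proof.
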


This correspondence allows us to get a functional equation for the
generating series of the family \cQ of non-degenerated (meaning, with at least $2$ faces) 
rooted simple
quadrangulations. Let $q(x)=\sum_{n\geq 2}q_nx^n$ be the series of \cQ
according to the number of faces (including the outer one). The
generating series of rooted simple quadrangulations with a marked face
is then equal to $q'(x)$. We want to express also the family $\cD_4$ of
rooted quasi-simple 2-dissections in terms of~\cQ.

\begin{figure}
  \centering
  \subfigure[Nested
  2-cycles]{\includegraphics[scale=.9, page=3]{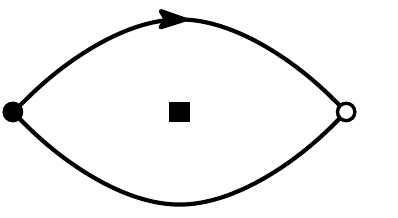}}
  \qquad\quad 
  \subfigure[Element of
  $\mathcal{A}^{\scriptscriptstyle{\blacksquare}}$]{\includegraphics[scale=.9,
    page=1]{decomp_quad}}
  \qquad\quad
  \subfigure[Element of
  $\mathcal{A}^{\scriptscriptstyle{-}}$]{\includegraphics[scale=.9, page=2]{decomp_quad}}
  \caption{The decomposition of a quasi-simple quadrangular 2-dissection.}
  \label{fig:decas}
\end{figure}

Observe that for any $\sD\in \cD_4$, its separating 2-cycles are
nested and therefore ordered from innermost to outermost. This yields
a decomposition of $\sD_4$ as a sequence of components. Let \cA (\resp
\cB) be the family of rooted 2-dissections with a marked inner vertex
(\resp with a marked inner edge) and where the unique 2-cycle is the
outer face contour.  We can cut \sD along the nested 2-cycles to
obtain a pointed 2-dissection in \cA and a sequence of maps in \cB
(see Figure~\ref{fig:decas}).  Denoting respectively by $d(x)$,
$a^\carre(x)$, $a^\trait(x)$ the generating series of \cD, \cA, and
\cB according to the number of quadrangular faces, this gives:
\begin{equation}\label{eq:d}
d(x)=\frac{a^\carre(x)}{1-a^\trait(x)}.
\end{equation}

Deleting the non-root outer edge of a rooted 2-dissection gives a
rooted quadrangulation (possibly degenerated). Taking into account the
marked inner vertex or edge, respectively chosen among the $n$ inner
vertices and $2n-1$ inner edges, we get:
\begin{equation}\label{eq:a}
\begin{cases}
a^\carre(x) ~=~ 2x + \sum_{n\geq 2} n q_n x^n ~=~ 2x + xq'(x) &\\
a^\trait(x) ~=~ 2x + \sum_{n\geq 2} (2n-1) q_n x^n ~=~ 2x + 2xq'(x) - q(x)&
\end{cases}
\end{equation}

Putting together Equations~\eqref{eq:d} and~\eqref{eq:a},
Corollary~\ref{cor} implies:
\begin{proposition}\label{prop:compquad}
  The series $q(x)$ satisfies the following equation:
\begin{equation}\label{eq:q}
  x \cdot [2q'(x)^2+3q'(x)+2] =  q'(x) \cdot [1+q(x)].
\end{equation}
\end{proposition}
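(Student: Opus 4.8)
The plan is to derive Equation~\eqref{eq:q} by assembling the three ingredients the text has just set up: the bijective Corollary~\ref{cor}, the sequence decomposition~\eqref{eq:d}, and the face-deletion identities~\eqref{eq:a}. Since Corollary~\ref{cor} identifies rooted simple quadrangulations with a marked face (series $q'(x)$) with rooted quasi-simple pointed $2$-dissections (series $d(x)$), the starting point is simply the equality $q'(x)=d(x)$. Everything after that is a matter of substituting~\eqref{eq:a} into~\eqref{eq:d} and clearing denominators, so I expect the proof to be short and essentially computational.

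\begin{proof}
  By Corollary~\ref{cor}, the series $q'(x)$ of rooted simple quadrangulations with a marked face coincides with the series $d(x)$ of rooted quasi-simple pointed $2$-dissections, so $q'(x)=d(x)$. Substituting the expressions from Equation~\eqref{eq:a} into the sequence decomposition~\eqref{eq:d} gives
  \[
    q'(x)=\frac{2x+xq'(x)}{1-\bigl(2x+2xq'(x)-q(x)\bigr)}.
  \]
  Clearing the denominator yields
  \[
    q'(x)\bigl[1-2x-2xq'(x)+q(x)\bigr]=2x+xq'(x),
  \]
  and, after expanding and regrouping all terms carrying a factor $x$ on the left-hand side, this rearranges to
  \[
    x\bigl[2q'(x)^2+3q'(x)+2\bigr]=q'(x)\bigl[1+q(x)\bigr],
  \]
  which is exactly Equation~\eqref{eq:q}.
\end{proof}

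The only genuine content is the bijective input supplied by Corollary~\ref{cor} and the counting identities~\eqref{eq:a}, both of which are granted by the excerpt; the remaining manipulation is routine algebra. Consequently I do not anticipate any real obstacle here. The one point that deserves care is the bookkeeping of the factor $x$ and the term $q(x)$ when clearing denominators, since a sign error in collecting the $2x$, $2xq'(x)$, and $q(x)$ contributions is the most likely slip; double-checking the coefficient $3q'(x)$ arising as the sum of the $q'(x)$ coming from the numerator and the $2q'(x)$ coming from $2xq'(x)$ on the left is the natural sanity check.
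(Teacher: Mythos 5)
Your proof is correct and follows exactly the paper's route: the paper's entire argument is the sentence ``Putting together Equations~\eqref{eq:d} and~\eqref{eq:a}, Corollary~\ref{cor} implies'' Equation~\eqref{eq:q}, i.e., precisely the identification $q'(x)=d(x)$ from the bijection followed by the substitution and denominator-clearing that you spell out. Your algebra checks out, so the only difference is that you make explicit the routine manipulation the paper leaves implicit.
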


From this equation, written as $q'=x(2+2q'^2+3q')/(1+q)$, 
one readily extracts the development of $q(x)$
incrementally:
\[
q(x)=x^2+2x^3+6x^4+22x^5+91x^6+408x^7+1938x^8+\dots
\]
As shown next, the exact expression of the coefficients (first obtained
by Tutte from the recursive method~\cite{Tu63} and subsequently
recovered by Schaeffer~\cite{S-these} using a bijection with ternary
trees) can also be recovered from~\eqref{eq:q}:

\begin{corollary}
  For $n\geq 1$, the number of rooted simple quadrangulations with $n$
  faces is equal to:
  \[
\frac{4(3n)!}{n!(2n+2)!}.
  \]
  Equivalently, the series $q(x)$ is expressed as $q(x) = x
  [\alpha(x)-2] [1-\alpha(x)]$, where $\alpha\equiv\alpha(x)$ is the
  series of rooted ternary trees, specified by $\alpha = 1+x\alpha^3$.
\end{corollary}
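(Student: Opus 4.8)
The plan is guess-and-verify followed by Lagrange inversion. Writing $\alpha\equiv\alpha(x)$ for the ternary-tree series ($\alpha=1+x\alpha^3$), I would show that the proposed series $q(x):=x[\alpha-2][1-\alpha]$ is \emph{a} solution of \eqref{eq:q} lying in $x^2\mathbb{Z}[[x]]$, argue that \eqref{eq:q} has a \emph{unique} such solution, and finally extract $q_n$ from the closed form. The entire computation hinges on one clean identity that I would establish first, namely $q'(x)=2(\alpha-1)$.

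To obtain this identity, differentiate $\alpha=1+x\alpha^3$ to get $\alpha'(1-3x\alpha^2)=\alpha^3$; substituting $x\alpha^3=\alpha-1$ (equivalently $x\alpha^2=(\alpha-1)/\alpha$) turns this into $\alpha'=\alpha^4/(3-2\alpha)$, hence $x\alpha'=\alpha(\alpha-1)/(3-2\alpha)$. Differentiating $q=x(-\alpha^2+3\alpha-2)$ then yields $q'=(-\alpha^2+3\alpha-2)+(3-2\alpha)\,x\alpha'$, and the factor $3-2\alpha$ cancels, leaving $q'=2(\alpha-1)$. With this in hand the verification of \eqref{eq:q} becomes a one-line rational-function check: using $x=(\alpha-1)/\alpha^3$ one finds $1+q=(4\alpha^2-5\alpha+2)/\alpha^3$, so both sides of \eqref{eq:q} reduce to $2(\alpha-1)(4\alpha^2-5\alpha+2)/\alpha^3$.

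For uniqueness, rewrite \eqref{eq:q} as $(1+q)q'=x(2+3q'+2q'^2)$ and compare coefficients of $x^m$. On the left the isolated term $q'$ contributes $(m+1)q_{m+1}$, while every remaining contribution (from $qq'$ on the left and from the whole right-hand side) involves only coefficients $q_j$ with $j\le m$. Hence, starting from the base value $q_2=1$ forced by the order-$x$ comparison, the relation at order $x^m$ determines $q_{m+1}$ recursively, so there is exactly one solution in $x^2\mathbb{Z}[[x]]$; since the closed form is such a solution, it is \emph{the} solution.

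Finally I would read off coefficients by Lagrange inversion. Setting $\beta=\alpha-1$, so that $q'=2\beta$ and $\beta=x(1+\beta)^3$, Lagrange inversion gives $[x^{m-1}]\beta=\tfrac{1}{m-1}\binom{3(m-1)}{m-2}$; since $[x^{m-1}]q'=mq_m$, this yields $q_m=\tfrac{2}{m(m-1)}\binom{3m-3}{m-2}$, which simplifies to $\tfrac{2(3m-3)!}{m!(2m-1)!}=\tfrac{4(3m-3)!}{(m-1)!(2m)!}$; setting $n=m-1$ (the number of inner faces) gives the announced $\tfrac{4(3n)!}{n!(2n+2)!}$. The computations are all routine; the only genuinely load-bearing point is the uniqueness argument, which is what legitimizes the guess-and-verify strategy, while the cancellation producing $q'=2(\alpha-1)$ is the step that makes both the verification and the inversion painless.
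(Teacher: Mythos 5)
Your proposal is correct and takes essentially the same route as the paper: guess-and-verify against the unique power-series solution of \eqref{eq:q}, hinging on the identical key identity $q'(x)=2(\alpha-1)$ (which the paper computes as $f'(x)=\frac{\mathrm{d}f}{\mathrm{d}\alpha}/\frac{\mathrm{d}x}{\mathrm{d}\alpha}=2\alpha-2$). The only difference is that you make explicit two steps the paper leaves implicit, namely the coefficient-recursion argument for uniqueness and the Lagrange-inversion extraction of $q_m$ from the closed form.
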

\begin{proof}
  Equation~\eqref{eq:q} above admits a unique power series solution
  that is equal to $0$ at $0$, hence it suffices to check that $f
  \equiv f(x) := x [\alpha(x)-2] [1-\alpha(x)]$ is indeed solution
  of~\eqref{eq:q}; note that $x$ and $f(x)$ have rational expressions
  in terms of $\alpha$, hence this also holds for $f'(x)$, since
  $f'(x)=\frac{\mathrm{d}f}{\mathrm{d}\alpha} /
  \frac{\mathrm{d}x}{\mathrm{d}\alpha}$.  We have:
  \[
  x=\frac{\alpha-1}{\alpha^3},\qquad
  f(x)=\frac{(\alpha-1)^2 (2-\alpha)}{\alpha^3},\quad
  \text{and}\quad f'(x)=2\alpha-2.
  \]
  Plugging these expressions in Equation~\eqref{eq:q}, the two
  handsides coincide, which concludes the proof.

  As Alin Bostan showed us, Equation~\eqref{eq:q} can however be
  solved directly, without guessing the solution. Let $r \equiv r(x) =
  q'(x)$, Equation~\eqref{eq:q} rewrites:
  \[
  1 + q = x \cdot \frac{2r^2+3r+2}{r},
  \]
  hence, taking the derivative:
  \begin{equation}\label{eq:deriv}
  r = \frac{2r^2+3r+2}{r}  + 2 x r' \frac{r^2-1}{r^2}, \qquad
  \text{\ie} \qquad  
  (r+1) \cdot \left[ r(r+2) + 2 x r'(r-1) \right] =  0,
  \end{equation}
  or, assuming $r\neq 2$:
  \begin{equation}\label{eq:alin}
  (r+1) \cdot \frac{r^2}{(r+2)^2} \cdot \frac{\mathrm{d}}{\mathrm{d}x}
  \left(\frac{x(r+2)^3}{r}\right) = 0 .
  \end{equation}
  Assuming now that $r\neq0$ and $r\neq 1$, we get $x (r(x)+2)^3 -
  c r(x) = 0 $ for some constant $c$. 
  On the other hand, Equation~\eqref{eq:deriv} implies that 
  \[
  \frac{\mathrm{d}}{\mathrm{d}x}\left(4q(x) - 2xr(x) + xr(x)^2\right)
  = 0,
  \qquad \text{hence} \qquad q(x) = \frac12 xr(x) + \frac14 xr(x)^2
  + c'
  \]
  for some constant~$c'$; $q(0) = r(0) = 0$ implies that $c'= 0$. Thus
  the non affine solutions of Equation~\eqref{eq:q} have the following form:
  \[
  c + x \cdot (\alpha_c(x)-1)(2-\alpha_c(x)) \quad\text{where}\quad x
  \alpha_c(x)^3 = c(\alpha_c(x)-1).
  \]
  Initial conditions for $q$ imply that $c=1$.
\end{proof}

\subsection{Getting a functional equation for simple triangulations}\label{sub:triangulation}

The case of simple triangulations is very similar to the one described
above for quadrangulations; only the decomposition is
more complicated.

Proposition~\ref{prop:quoclastrig} and Theorem~\ref{thm:bijtrig}
describe bijections between symmetric simple triangulations and two
families that are hence also in one-to-one correspondence:

\begin{corollary}\label{cor:trig}
  Simple triangulations with $2n$ faces and one marked edge are
  in one-to-one correspondence with quasi-simple pointed 1-dissections
  with $2n$ faces.
\end{corollary}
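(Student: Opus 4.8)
The plan is to derive the statement as a direct composition of the two bijections already proved, exploiting that both have the family of symmetric simple triangulations as one of their two sides. Proposition~\ref{prop:quoclastrig} gives, via the $3$-quotient, a one-to-one correspondence between symmetric simple triangulations with $3m$ inner faces and quasi-simple pointed triangular $1$-dissections with $m$ triangular faces, for every positive odd~$m$. Theorem~\ref{thm:bijtrig} gives, via the mapping \Phit, a one-to-one correspondence between the very same symmetric simple triangulations with $3m$ inner faces and simple triangulations with $m$ inner faces carrying one marked edge, again for every positive odd~$m$. Composing the inverse of the first correspondence with the second yields, for every odd~$m$, a one-to-one correspondence between quasi-simple pointed $1$-dissections with $m$ triangular faces and simple triangulations with $m$ inner faces and one marked edge. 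No new combinatorial construction is needed at this step; it is a formal composition of maps that are already bijective.

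What remains is to rephrase the face count, from the number of inner faces (used above) to the total number of faces (used in the statement). First I would note that in a simple triangulation every face, including the outer one, has degree~$3$, so the handshake identity $3f=2e$ between the numbers of faces and edges forces the total number of faces $f$ to be even: writing $f=2n$, the number of inner faces is $m=f-1=2n-1$, which is automatically odd. Hence the parity restriction ``$m$ odd'' appearing in Proposition~\ref{prop:quoclastrig} and Theorem~\ref{thm:bijtrig} is not an extra hypothesis once one indexes by the total face number~$2n$. On the dissection side, a triangular $1$-dissection with $m$ triangular faces has a single additional outer ($1$-gonal) face, so its total number of faces is $m+1=2n$ as well; thus both families in the composed bijection have exactly $2n$ faces, and the correspondence reads precisely as stated.

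The only point that deserves genuine care, rather than being a true obstacle, is this bookkeeping of face counts: one must check that the common odd inner-face parameter $m$ of the two cited results matches the even total-face parameter~$2n$ on both the triangulation side and the dissection side, through $m=2n-1$ together with the handshake parity argument above. Everything else follows at once from composing the previously established bijections.
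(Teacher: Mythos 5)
Your proof is correct and takes essentially the same route as the paper: the corollary is obtained by composing the bijections of Proposition~\ref{prop:quoclastrig} and Theorem~\ref{thm:bijtrig} through their common side, the symmetric simple triangulations. Your explicit bookkeeping --- translating inner-face counts to total-face counts and noting that $3f=2e$ makes the odd-inner-face condition automatic --- is precisely the verification the paper leaves implicit in its one-sentence justification.
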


Let $\cT$ be the family of rooted simple triangulations and let
$t(x)=\sum_{n\geq 1}t_nx^n$ be the series of $\cT$ according to half
the number of faces, \ie $t_n$ is the number of rooted simple
triangulations with $2n$ faces. We want to use
Corollary~\ref{cor:trig} to derive a functional equation for $t(x)$,
which requires to first express the family \cS of simple
triangulations with a marked edge and the family $\cD_3$ of quasi-simple
pointed 1-dissections in terms of~\cT.

According to the Schnyder tree decomposition of simple triangulations,
each edge is canonically associated to one of the three outer
edges, hence simple triangulations with a marked edge are exactly
rooted simple triangulations with a marked edge chosen among the $n$
edges in the tree rooted at the root edge. Hence:
\begin{equation}\label{eq:trigleft}
  s(x) = \sum_{n\geq 1} n t_n x^{n} = xt'(x).
\end{equation}

The enumeration of quasi-simple pointed triangulations can be carried
out similarly to quasi-simple pointed quadrangulations, but
extra care has to be taken in order to deal with special conditions
for both loops and 2-cycles. Observe that any 1-dissection \sD in
$\cD_3$ is implicitly rooted, and that removing the outer loop yields
either a unique edge or a quasi-simple pointed triangular 2-dissection that
can be canonically rooted with the same root vertex as \sD, 
with the additional constraint that there is no loop
incident to the root vertex; let us call $\cU$ this family and $\cuu$
its generating series according to half the number of triangular
faces. Then the generating series $d_3$ of $\cD_3$ is given by:
\[
  d_3(x) = x \cdot [1+\cuu(x)]. 
\]

Note that removing the non-root edge incident to the outer face of a
triangular 2-dissection produces a rooted triangulation with one face
less.  Hence families of rooted triangular 2-dissections (such as \cU)
and families of rooted triangulations can be naturally identified.  To
decompose elements of \cU, let \cV be the family of quasi-simple
triangular 2-dissections and \cTc, \cTt and \cTct be respectively the
families of rooted simple triangulations (or equivalently, triangular
2-dissections with neither loop nor 2-cycle except the outer
face) with respectively a marked inner vertex, a marked inner edge,
and a marked inner edge incident to the root vertex. Let us also
denote \cvv, \cff, \cgg and \chh the corresponding generating series
according to half the number of triangular faces.

\medskip

\begin{figure}
  \centering
  \subfigure[]{\includegraphics[scale=.9,page=1]{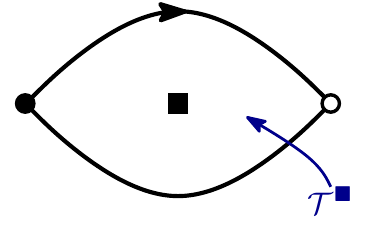}}\qquad
  \subfigure[]{\includegraphics[scale=.9,page=2]{Trig_U}}\qquad
  \subfigure[]{\includegraphics[scale=.9,page=3]{Trig_U}}\qquad
  \subfigure[]{\includegraphics[scale=.9,page=4]{Trig_U}}
  \caption{The four cases in the decomposition of an element of \cU.}
  \label{fig:Trig_U}
\end{figure}
Let us now proceed to the decomposition of an element of $\cU$, four
different configurations can appear (see Figure~\ref{fig:Trig_U}):
\begin{itemize}
\item[(a)] either the outer face is the only 2-cycle; 
\item[(b)] or the outer face is incident to a single face, the third
  side of which is a loop;
\item[(c)] or the outermost 2-cycle (other than the outer face) is
  incident to the root vertex;
\item[(d)] or the outermost 2-cycle (other than the outer face) is not
  incident to the root vertex.
\end{itemize}
Note that each inner edge of a simple triangulation, and in particular
of any map \sT in $\cTct$ is naturally oriented according to the
minimal 3-orientation of~\sT.
Hence there is a canonical way of plugging a rooted 2-dissection in
the (rooted) 2-cycle obtained by \emph{opening} the marked edge of a
map in \cTt.  Hence:
\[ \cU = \cTc + \cX \cdot (1+\cU) + \cTct \cdot \cU + (\cTt \setminus
\cTct)\cdot \cV,
\]
where \cX is an atom representing a triangle.  A similar decomposition
can be written for \cV, and putting things together and translating
them into the language of generating series, we obtain:
\[
\begin{cases}
\cuu = \cff + x(1+\cuu) + \chh\cuu +(\cgg - \chh)\cvv,&\\
\cvv = \cff + 2x(1+\cuu) + \cgg\cvv.&
\end{cases}
\]
The series $\cff$, $\cgg$ and $\chh$can now be expressed in
terms of $t$ as follows. A triangulation with $2n$ faces has $3n$
edges and $n+2$ vertices, so removing the non-root outer edge and
taking into account the marked inner vertex or edge, we get:
\[
\begin{cases}
\cff(x)=\sum_{n\geq 1}nt_nx^n=xt'(x), &\\
\cgg(x)=\sum_{n\geq 1}(3n-1)t_nx^n=3xt'(x)-t(x).&
\end{cases}
\]
The computation of $\chh(x)$ is a little more complicated. Let \sT be
a simple rooted triangulation with at least four faces. By merging the
two ends of the root edge of \sT and collapsing the inner triangle
incident to it into an edge, we obtain a 2-dissection that is
canonically rooted with same root vertex as \sT, and with a marked
edge incident to that vertex. This triangulation is not far from being
simple: it has no loop and the only 2-cycles separate the two marked
edges. Hence, decomposing this map along the sequence of nested
cycles, we get a sequence of elements of~\cTct, hence:
\[
\frac{t(x)-x}{x}= \sum_{n\geq 2}t_nx^n = \frac{\chh(x)}{1-\chh(x)},\quad
\text{from which we obtain}\quad \chh(x)=\frac{t(x)-x}{t(x)}.
\]
Combining all these equations and with the help of computer algebra,
we obtain the following expression for the generating series of rooted
quasi-simple triangular 1-dissections:
\begin{equation}\label{eq:trigright}
  d_3(x) = \frac{x \cdot [1+t(x)-2xt'(x)]}{1-2x+2t(x)-3xt'(x)+t(x)^2-3xt'(x)t(x)}.
\end{equation}
The following result is then obtained by gathering
Corollary~\ref{cor:trig} and Equations~\eqref{eq:trigleft}
and~\eqref{eq:trigright}, after some easy simplifications:
\begin{proposition}
  The generating series $t(x)$ of rooted simple triangulations
  according to half the number of faces is the unique solution of the
  following equation:
  \begin{equation}\label{eq:t}
    3x t'(x)^2 + 1 = [t(x) + 1] \cdot t'(x),
  \end{equation}
  which takes value 0 at 0. 
\end{proposition}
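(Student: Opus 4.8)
The plan is to turn the combinatorial identity of Corollary~\ref{cor:trig} into an equation between generating series and then reduce it to~\eqref{eq:t} by elementary manipulations in the ring of formal power series. Since Corollary~\ref{cor:trig} matches, size by size (with $n$ recording half the number of faces), simple triangulations with a marked edge against quasi-simple pointed $1$-dissections, it yields the identity $s(x)=d_3(x)$. I would substitute the expression $s(x)=x\,t'(x)$ from~\eqref{eq:trigleft} together with the formula for $d_3(x)$ from~\eqref{eq:trigright}, obtaining
\[
x\,t'(x)=\frac{x\,[\,1+t(x)-2x\,t'(x)\,]}{1-2x+2t(x)-3x\,t'(x)+t(x)^2-3x\,t'(x)\,t(x)}.
\]

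First I would cancel the common factor $x$ (legitimate since both sides are divisible by $x$ as formal series) and clear the denominator, which is a unit in $\mathbb{Q}[[x]]$ because its constant term is $1$. Writing $t=t(x)$ and $t'=t'(x)$, cross-multiplication gives, after expanding the left-hand side, the two occurrences of $-2x\,t'$ cancelling between the two sides; what remains is $t'(1+2t+t^2)-3x\,(t')^2(1+t)=1+t$. The key algebraic observation is that $1+2t+t^2=(1+t)^2$, so the entire relation is divisible by the unit $(1+t)$. Dividing through by $(1+t)$ produces $t'(1+t)-3x\,(t')^2=1$, which is exactly~\eqref{eq:t}.

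It then remains to justify that~\eqref{eq:t} has a unique power-series solution vanishing at $0$. I would argue by extracting coefficients: writing $t(x)=\sum_{n\geq 1}t_n x^n$ and comparing coefficients of $x^{n-1}$ in the rearranged equation $(t+1)\,t'=1+3x\,(t')^2$, one sees that $t_n$ enters only through the term $1\cdot t'$, contributing $n\,t_n$, whereas every other contribution at order $x^{n-1}$ (from $t\cdot t'$ and from $3x\,(t')^2$) involves only $t_1,\dots,t_{n-1}$. Thus $n\,t_n$ is determined by the lower-order coefficients, fixing $t_n$ uniquely; the order-$x^0$ comparison gives the base case $t_1=1$. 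Hence $t(x)$ is the unique such series, which completes the argument.

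The manipulations here are genuinely routine once~\eqref{eq:trigright} is granted, so I do not expect a real obstacle in the reduction itself. The only points requiring a little care are formal ones: checking that each side is divisible by $x$ before cancelling, and noticing the factorization $(1+t)^2$ that licenses the division by $(1+t)$ — without it the expression looks more complicated than it is. The genuinely delicate work lies upstream, in the derivation of the formula~\eqref{eq:trigright} for $d_3(x)$ through the four-case decomposition of \cU and the computation of \chh, which we are entitled to assume here.
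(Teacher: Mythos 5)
Your proposal is correct and takes essentially the same route as the paper: the paper's proof consists precisely of equating $s(x)=xt'(x)$ from~\eqref{eq:trigleft} with $d_3(x)$ from~\eqref{eq:trigright} and performing the ``easy simplifications'' (the cancellation of $-2xt'$ and of the unit factor $(1+t)$) that you carry out explicitly. Your coefficient-extraction argument for uniqueness correctly fills in a detail that the paper asserts but does not spell out.
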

From this equation, written as $t'=(3xt'^2+1)/(t+1)$, one readily
extracts the development of $t(x)$ incrementally:
\[
t(x)=x+x^2+3x^3+13x^4+68x^5+399x^6+2530x^7+16965x^8+\dots
\]

As in the quadrangular case, the exact expression of the coefficients
(first obtained by Tutte from the recursive method~\cite{Tu63} and
subsequently recovered by Poulalhon and Schaeffer~\cite{S-these} using
a bijection with some decorated trees that are in bijection with
quaternary trees) can also be recovered from~\eqref{eq:t}:

\begin{corollary}
  For $n\geq 1$, the number of rooted simple triangulations with $2n$
  faces is equal to:
  \[
  \frac{2(4n-3)!}{n!(3n-1)!}.
  \]
  Equivalently, the series $t(x)$ is expressed as $t(x)=
  \dfrac{(\alpha(x)-2)(1-\alpha(x))}{\alpha(x)^2}$, where
  $\alpha\equiv\alpha(x)$ is the series of rooted quaternary trees,
  specified by $\alpha=1+x\alpha^4$.
\end{corollary}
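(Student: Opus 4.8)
The plan is to mirror the proof of the quadrangular corollary, using a guess-and-verify argument. Since \eqref{eq:t} rewrites as $t'=(3xt'(x)^2+1)/(t+1)$, its Taylor coefficients are determined one by one, so \eqref{eq:t} has a unique power series solution vanishing at $0$. It therefore suffices to exhibit the announced closed form and check that it solves \eqref{eq:t}; the coefficient formula will then follow by a separate Lagrange inversion.

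First I would set up the rational parametrization by $\alpha$. Since $\alpha=1+x\alpha^4$ has $\alpha(0)=1$, the map $x\mapsto\alpha(x)$ is invertible as a formal power series, and inverting the defining relation gives $x=(\alpha-1)/\alpha^4$. The proposed solution $t=(\alpha-2)(1-\alpha)/\alpha^2=-1+3\alpha^{-1}-2\alpha^{-2}$ is then a rational function of $\alpha$, and so is $t'(x)=(\mathrm{d}t/\mathrm{d}\alpha)/(\mathrm{d}x/\mathrm{d}\alpha)$. A short computation gives $\mathrm{d}t/\mathrm{d}\alpha=(4-3\alpha)/\alpha^3$ and $\mathrm{d}x/\mathrm{d}\alpha=(4-3\alpha)/\alpha^5$, whence the pleasant identity $t'(x)=\alpha^2$. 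Substituting $x=(\alpha-1)/\alpha^4$ and $t'=\alpha^2$ into \eqref{eq:t}, both handsides collapse to $3\alpha-2$: for the left side $3x\alpha^4=3(\alpha-1)$, while for the right side $t+1=(3\alpha-2)/\alpha^2$ so that $(t+1)t'=3\alpha-2$. This establishes the equivalent series identity.

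For the explicit count I would extract $[x^n]t$ by Lagrange inversion. Setting $u=\alpha-1$ turns the functional equation into $u=x(1+u)^4$, the standard form $u=x\psi(u)$ with $\psi(u)=(1+u)^4$, and rewrites $t=H(u):=u(1-u)/(1+u)^2$, so that $H'(u)=(1-3u)/(1+u)^3$. Lagrange inversion then yields
\[
[x^n]t=\frac1n[u^{n-1}]\,H'(u)\,\psi(u)^n=\frac1n[u^{n-1}]\,(1-3u)(1+u)^{4n-3}=\frac1n\left[\binom{4n-3}{n-1}-3\binom{4n-3}{n-2}\right],
\]
and a one-line binomial simplification (putting both terms over $(n-1)!\,(3n-1)!$, where $\binom{4n-3}{n-1}$ contributes a factor $3n-1$ and $3\binom{4n-3}{n-2}$ a factor $3(n-1)$, leaving $2$) turns the bracket into $2(4n-3)!/((n-1)!\,(3n-1)!)$, giving the claimed $2(4n-3)!/(n!\,(3n-1)!)$.

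The computations above are entirely routine once the parametrization is in place; the only point requiring a little care — and thus the main obstacle — is the Lagrange inversion, since $\alpha$ is centered at $1$ rather than $0$, so one must pass to $u=\alpha-1$ before applying the formula and track the resulting shift in the binomial exponents. An alternative, following the Bostan-style direct integration used for $q$, would start from $3xr^2+1=(t+1)r$ with $r=t'$, differentiate to eliminate $t$, and integrate the resulting exact form; I expect this route to be available but somewhat heavier than the guess-and-verify argument, so I would present the latter.
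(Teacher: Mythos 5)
Your proposal is correct and follows essentially the same route as the paper: both argue uniqueness of the power-series solution of \eqref{eq:t} and then verify the guessed parametrization $x=(\alpha-1)/\alpha^4$, $t=(\alpha-2)(1-\alpha)/\alpha^2$, $t'(x)=\alpha^2$, under which both sides of \eqref{eq:t} reduce to $3\alpha-2$. The only difference is that you make explicit the Lagrange-inversion step (with the shift $u=\alpha-1$) extracting the coefficients $2(4n-3)!/(n!\,(3n-1)!)$, which the paper leaves implicit, and your computation there is correct.
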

\begin{proof}
  Equation~\eqref{eq:t} above admits a unique power series solution
  that is equal to $0$ at $0$, so it suffices to check that $f\equiv
  f(x):=(\alpha(x)-2)(1-\alpha(x)) / \alpha(x)^2$ is solution of~\eqref{eq:t}.
  Note that $x$ and $f(x)$ have rational expressions in terms of
  $\alpha$, and so does $f'(x)=\frac{\mathrm{d}f}{\mathrm{d}\alpha} /
  \frac{\mathrm{d}x}{\mathrm{d}\alpha}$:
  \[
  x=\frac{\alpha-1}{\alpha^4},\qquad f(x) =
  \dfrac{(\alpha(x)-2)(1-\alpha(x))}{\alpha(x)^2},\quad
  \text{and}\quad f'(x)=\alpha^2,
  \]
  and these expressions satisfy Equation~\eqref{eq:t}, which concludes
  the proof.

  Now as for Equation~\eqref{eq:q}, a direct proof without guessing
  the solution is possible; first let $r(x) = t'(x)$,
  Equation~\eqref{eq:t} rewrites $1+t = 3xr + 1/r$, hence $r = 3r'+3r
  -r'/r^2$, \ie $u'(3xu^2-1) + 2u^3 = 0$. Then we seek for $A$, $B$
  such that $\frac{\mathrm d}{\mathrm d x} \left(A(r) x + B(r) \right)
  = 0$. Easy computations show that $x r^{3/2} + r^{-1/2} $ is
  suitable. Using initial conditions, we get $x r^2+1 = r^{1/2}$, \ie,
  with $\alpha = x r^2 + 1$, $1+x\alpha^4 = \alpha$.
\end{proof}

\section{Radial distance of symmetric quadrangular dissections}\label{sec:quad_dista}
\label{sec:dist}
For $k\geq 2$, $i>0$, and \cDk a family of $k$-symmetric dissections,
let \cDki be the family of dissections in \cDk where the central
vertex is at distance $i$ from the outer face boundary; define the
size of a $k$-symmetric quadrangular (\resp triangular) dissection \sD
as the integer $n$ such that \sD has $kn$ inner faces (\resp $(2n+1)k$
inner faces).  Let $\Dki(x)$ be the generating series of \cDki with
respect to the size.

We compute here the expression of $\Dki(x)$ for general, simple, and
irreducible $k$-symmetric quadrangular $2k$-dissections and triangular
$k$-dissections.  So from now on \cDk is either a family of
$k$-symmetric quadrangular $2k$-dissections or a family of
$k$-symmetric triangular $k$-dissections.  Quadrangular and triangular
dissections are treated respectively in this section and in the next
one (Section~\ref{sec:dist_trig}).  Note that, when $k=2$, $\Dki(x)=0$
for quadrangular irreducible dissections and for triangular simple and
irreducible dissections, and when $k=3$, $\Dki(x)=0$ for triangular
irreducible dissections.

We use the letter $\cF$, $\cG$, $\cH$ (instead of $\cD$) for the
general, simple, and irreducible case respectively.  To obtain the
generating function expressions, we combine results of Bouttier
\etal~\cite{BoDFGu03,BoGu} on the $2$-point functions of general
quadrangulations 
with quotient and substitution operations, and
Lemma~\ref{lem:quotient_distance}.

\begin{rem}\label{rk:encloses}
  Lemma~\ref{lem:quotient_distance} implies that a $k$-symmetric
  quadrangular dissection has no cycle of length less than $2k$ that
  strictly encloses the central vertex (indeed the $k$-quotient has
  only faces of even degree, hence is bipartite, hence has no loop).
  And a $k$-symmetric triangular dissection has no cycle of length
  less than $k$ that strictly encloses the central vertex.
\end{rem}

\subsection{Symmetric quadrangular dissections}
 Define the algebraic generating function $P\equiv P(x)$ by
\begin{equation}\label{eq:defP}
  P=1+3xP^2,
\end{equation}
and define the algebraic generating function $X\equiv X(x)$ by
\begin{equation}\label{eq:defX}
  X+\frac{1}{X}+1=\frac{3}{P-1}.
\end{equation}
Define also $X_{\infty}:=P$. The following result is very closely
related to a result by Bouttier, Di Francesco, and Guitter~\cite{BoDFGu03}.

\begin{proposition}
  For each $i\geq 1$ and $k\geq 2$, the generating function $\Fki(x)$
  has the following expression (which does not depend on $k$):
\[
\Fki(x)=X_{i+1}-X_i,\ \ \ \mathrm{where}\ X_i=X_{\infty}\frac{(1-X^i)(1-X^{i+3})}{(1-X^{i+1})(1-X^{i+2})}.
\]
\end{proposition}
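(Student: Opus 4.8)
The plan is to reduce the computation to the two-point function of quadrangulations, which is precisely the object treated by Bouttier, Di Francesco and Guitter in~\cite{BoDFGu03}, and then to solve the resulting three-term recursion explicitly.

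First I would apply the classical quotient. A dissection in $\cFki$ is a $k$-symmetric quadrangular $2k$-dissection whose center $u$ lies at distance $i$ from the outer boundary; by Lemma~\ref{lem:quotient_count} its $k$-quotient is a pointed quadrangular $2$-dissection (outer degree $2k/k=2$) with $n$ inner faces, and by Lemma~\ref{lem:quotient_distance} the radial distance is preserved, so the quotient still has its marked vertex at distance $i$ from the digon boundary. As in Proposition~\ref{prop:clasquoquad}, un-quotienting (gluing $k$ cyclic copies around the center) is the inverse, and this works verbatim for every $k\geq 2$ since no simplicity constraint is imposed in the general case. This already explains why $\Fki$ does not depend on $k$: the target family --- pointed quadrangular $2$-dissections with marked vertex at distance $i$ from the boundary, counted by inner faces --- is the same for all $k$. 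Thus I am reduced to computing the generating function of these pointed digon-dissections refined by the height $i$ of the marked vertex.

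Next I would recognise this refined series as a difference of the BDG two-point function. Setting $X_i$ to be $1$ plus the generating function of pointed quadrangular $2$-dissections whose marked vertex is at distance strictly less than $i$ from the boundary (so that $X_0=0$ and $X_i\to X_\infty$ as $i\to\infty$), one has by construction $\Fki=X_{i+1}-X_i$, which is already the shape of the claimed formula. The key input, following~\cite{BoDFGu03}, is that peeling the dissection layer by layer according to the distance-labelling from the boundary --- equivalently, passing through the Schaeffer/mobile encoding in which labels record this distance --- turns the $X_i$ into the solution of the recursion
\[
X_i \;=\; 1 + x\,X_i\,(X_{i-1}+X_i+X_{i+1}),\qquad X_0=0,
\]
whose stationary value satisfies $X_\infty=1+3x\,X_\infty^2$, \ie $X_\infty=P$ by~\eqref{eq:defP}. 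Here the marked center plays the role of one of the two points and the digon boundary that of the other, the digon being the simplest possible boundary.

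Finally I would solve the recursion in closed form. The substitution $X_i=X_\infty\frac{(1-X^i)(1-X^{i+3})}{(1-X^{i+1})(1-X^{i+2})}$ automatically yields $X_0=0$ (because of the factor $1-X^0$) and $X_i\to X_\infty$ (because $X\to0$ as $x\to0$), so it only remains to check that it satisfies the three-term recursion; one verifies that the recursion holds identically, the compatibility condition being exactly $X+\tfrac1X+1=\tfrac{3}{P-1}$, that is~\eqref{eq:defX} (using $3xP^2=P-1$). Since the recursion together with its boundary conditions determines the $X_i$ uniquely as power series, this establishes the formula, and hence $\Fki=X_{i+1}-X_i$. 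The step I expect to be the main obstacle is the middle one: justifying rigorously that the pointed digon-dissections, refined by the height of the marked vertex, are governed by exactly this recursion, with the correct index conventions and the correct degenerate constant term (the extra $+1$ in $X_i$); the closed-form verification in the last paragraph is then only a routine, if slightly tedious, algebraic identity in $X$ and $P$.
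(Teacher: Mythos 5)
Your proposal is correct and takes essentially the same route as the paper: both reduce $\cFki$ via the classical $k$-quotient and Lemma~\ref{lem:quotient_distance} to pointed quadrangular $2$-dissections with the marked vertex at distance $i$ from the digon (equivalently, after contracting the digon to an edge, to the two-point function of quadrangulations), and then invoke Bouttier--Di Francesco--Guitter~\cite{BoDFGu03}, the paper quoting their closed form $\Fki=X_{i+1}-X_i$ directly while you quote their three-term recursion $X_i=1+xX_i(X_{i-1}+X_i+X_{i+1})$ and re-solve it. The only slip is definitional: as literally stated, your $X_i$ (``$1$ plus the series of dissections at distance $<i$'') gives $X_0=1$, not $0$ --- the additive $1$ is the degenerate single-edge map, whose far endpoint lies at distance $1$ and hence must be excluded at $i=0$ --- but this is harmless since only $i\geq 1$ enters the telescoping and the intended object coincides with the $R_i$ of~\cite{BoDFGu03}.
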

\begin{proof}
  If we take the $k$-quotient of dissection in $\cF_i^{(k)}$ we obtain
  a quadrangular 2-dissection with a pointed inner vertex
  at distance $i$ from the outer $2$-gon, according to
  Lemma~\ref{lem:quotient_distance}.  Notice that the outer $2$-cycle
  can be contracted (on the sphere) into a single edge $e$. This yields a
  quadrangulation of the sphere with a marked edge $e$ and pointed
  vertex $v$ at distance $i$ from $e$ (i.e., the extremity of $e$
  closest from $v$ is at distance $i$ from $v$). Hence $\Fki(x)$ is equal to the
  generating function $F_i(x)$ of quadrangulations with a marked
  edge $e$ and a marked vertex $v$ at distance $i$ from $e$.  The algebraic
  expression of $\Fki(x)=F_i(x)$, as $X_{i+1}-X_i$, has been obtained
  by Bouttier \etal~\cite{BoDFGu03}.
\end{proof}

\subsection{Symmetric simple quadrangular dissections}

\begin{figure}
  \centering
  \subfigure[The maximal 2-cycles,]{\includegraphics[page=1]{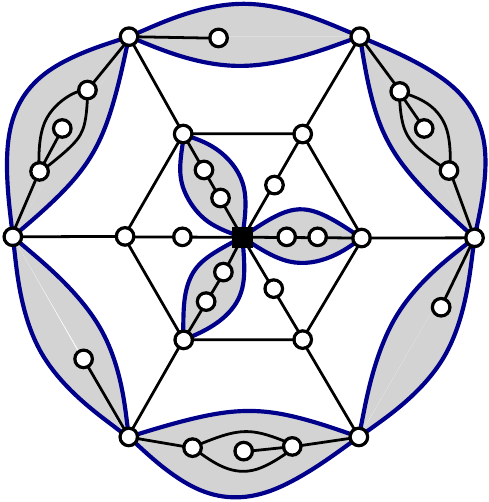}}
  \qquad\qquad
  \subfigure[and the simple core.]{\includegraphics[page=2]{simpleCore}}
  \caption{A 3-symmetric quadrangulation and its simple 3-symmetric core, obtained
    by collapsing all maximal 2-cycles into edges.}
  \label{fig:simplecore}
\end{figure}

We now compute $y\mapsto\Gki(y)$.  We classically proceed by
substitution (a substitution approach 
is also discussed in~\cite{BoGu} for rooted quadrangulations).  
The \emph{core} of $\gamma\in\cFki$ is
obtained by collapsing each maximal $2$-cycle (for the enclosed area)
of $\gamma$ into a single edge (note that no $2$-cycle can strictly
enclose the central vertex, according to Remark~\ref{rk:encloses}),
see Figure~\ref{fig:simplecore}.  This yields a simple dissection in
$\cGki$, and the distance of the central vertex to the outer boundary
is still $i$ (because there is no way of shortening this distance by
travelling inside a $2$-cycle). Conversely each $\gamma\in\cFki$ is
uniquely obtained from $\kappa\in\cGki$ ---with $nk$ inner faces---
where each of the $(2n+1)k$ edges is either left unchanged or blown
into a double edge in the interior of which a rooted quadrangulation
is patched, in a way that respects the symmetry of order $k$ (that is,
the $k$ edges in an orbit of edges of $\kappa$ undergo the same
substitution operation). Denoting by $f\equiv f(x)$ the series of
rooted quadrangulations according to the number of faces, we obtain
$\Fki(x)=\sum_{n\geq 1}[y^n]\Gki(y)\cdot x^n\cdot(1+f)^{2n+1}$. In other
words,
\begin{equation}\label{eq:FGquad}
  \Fki(x)=(1+f)\cdot\Gki(x\cdot(1+f)^2). 
\end{equation}
The series $f=f(x)$ is well known to be algebraic, 
having a rational expression in terms of $P$: $f=P(4-P)/3-1$ (we will
need this expression a few times).

Define $Q\equiv Q(y)$ as the algebraic series in $y$ defined by
\begin{equation}
  Q=1+yQ^3.
\end{equation}

\begin{lemma}\label{lem:xy_quad}
  Let $K(x)$ and let $L(y)$ be related by $K(x)=L(x(1+f)^2)$. Let
  $\widehat{K}(P)$ and $\widehat{L}(Q)$ be the expressions of $K(x)$
  and $L(y)$ in terms of $P\equiv P(x)$ and $Q\equiv Q(y)$, i.e.,
  $K(x)=\widehat{K}(P(x))$ and $L(y)=\widehat{L}(Q(y))$. Then
\[
\widehat{L}(Q)=\widehat{K}\Big(4-3/Q\Big).
\]
\end{lemma}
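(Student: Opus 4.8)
The plan is to reduce the whole statement to a single algebraic identity tying $P$ and $Q$ together under the change of variables $y=x(1+f)^2$, and then to pin down the correct branch by a uniqueness argument. Since $K(x)=L(y)$ by hypothesis, we have $\widehat{K}(P(x))=\widehat{L}(Q(y))$ whenever $y=x(1+f)^2$. So it suffices to prove that, under this substitution, $P$ and $Q$ are related by $P=4-3/Q$; granting this, we immediately read off $\widehat{L}(Q)=\widehat{K}(P)=\widehat{K}(4-3/Q)$, which is exactly the claim.

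First I would rewrite the two change-of-variable ingredients in terms of $P$ and $Q$. From $P=1+3xP^2$ one extracts $x=(P-1)/(3P^2)$, and from $Q=1+yQ^3$ one extracts $y=(Q-1)/Q^3$. Using the stated identity $1+f=P(4-P)/3$, the substitution $y=x(1+f)^2$ becomes
\[
y=x(1+f)^2=\frac{P-1}{3P^2}\cdot\frac{P^2(4-P)^2}{9}=\frac{(P-1)(4-P)^2}{27}.
\]
The heart of the computation is then to check that setting $P=4-3/Q$, equivalently $4-P=3/Q$, turns this into $(Q-1)/Q^3$: indeed $P-1=3(Q-1)/Q$ and $(4-P)^2=9/Q^2$, so the numerator becomes $27(Q-1)/Q^3$ and, after dividing by $27$, we recover $(Q-1)/Q^3=y$. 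Thus the relation $P=4-3/Q$ is consistent with $y=x(1+f)^2$.

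Next I would upgrade this formal consistency into a genuine identity between the series. Set $R(x):=3/(4-P(x))$, a well-defined power series since $4-P(0)=3\neq 0$, with $R(0)=1$; the computation above shows $R=1+y(x)\,R^3$ where $y(x)=x(1+f)^2$. On the other hand $Q(y(x))$ satisfies the same equation $S=1+y(x)\,S^3$ with the same initial value $S(0)=1$. Because $1+f=1+O(x)$, we have $y(x)=x+O(x^2)$ with nonzero linear term, so $x\mapsto y$ is an invertible power series and the equation $S=1+y(x)\,S^3$ determines $S$ uniquely order by order. Hence $R(x)=Q(y(x))$, i.e. $4-P=3/Q$ after substitution, as required, and the conclusion $\widehat{L}(Q)=\widehat{K}(4-3/Q)$ follows by composing with the hypothesis $\widehat{K}(P)=\widehat{L}(Q)$.

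The only genuinely delicate point is this last uniqueness step: the bare algebraic relation $(P-1)(4-P)^2/27=(Q-1)/Q^3$ admits several algebraic branches and does not by itself single out $P=4-3/Q$, so the correct branch must be isolated through the common initial condition $P(0)=Q(0)=1$ together with the order-by-order solvability of $S=1+y(x)\,S^3$. Everything else is a routine substitution, and reading off $\widehat{L}(Q)=\widehat{K}(P)=\widehat{K}(4-3/Q)$ completes the argument.
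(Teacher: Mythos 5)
Your proof is correct and takes essentially the same route as the paper's: both compute $y=x(1+f)^2=\frac{(P-1)(4-P)^2}{27}$ using $1+f=P(4-P)/3$, and then verify that the series $3/(4-P(x))$ satisfies the defining equation $Q=1+yQ^3$, hence coincides with $Q(y)$, giving $P=4-3/Q$. The only difference is that you make explicit the order-by-order uniqueness argument (and the branch-selection issue) that the paper leaves implicit in the phrase ``if we write $Q\equiv Q(y)=3/(4-P(x))$'', which is a welcome clarification but not a different proof.
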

\begin{proof}
  The change of variable relation is $y=x(1+f)^2$.  We have
  \[
  y=x(1+f)^2=\frac{P-1}{3P^2}\Big(P\frac{4-P}{3}\Big)^2=\frac{P-1}{3}\Big(\frac{4-P}{3}\Big)^2
  \]
  Hence, if we write $Q\equiv Q(y)=3/(4-P(x))$, we have
  $y=(1-1/Q)Q^{-2}$, so that $Q=1+yQ^3$. In addition $P(x)=4-3/Q(y)$.
\end{proof}

For the rest of this subsection, when we have $y$ and $x$ together in
an equation we assume $y$ and $x$ to be related by $y=x(1+f)^2$.
Define $Y_{\infty}(y):=X_{\infty}(x)/(1+f)$, $Y_i(y):=X_i(x)/(1+f)$,
and $Y(y):=X(x)$.  The expression of $X_i(x)$ in terms of
$X_{\infty}(x)$ and $X(x)$ ensures that (we do not need
Lemma~\ref{lem:xy_quad} at this step):
\[
Y_i=Y_{\infty}\frac{(1-Y^i)(1-Y^{i+3})}{(1-Y^{i+1})(1-Y^{i+2})}.
\]
In addition, since $\Gki(y)=\Fki(x)/(1+f)$, we have
\[
\Gki(y)=Y_{i+1}-Y_i.
\]

We now apply Lemma~\ref{lem:xy_quad} to get an algebraic expression of
$\Gki(y)$, written in terms of $Q(y)$. We have
\[
Y_{\infty}(y)=\frac{X_{\infty}(x)}{1+f(x)}=\frac{3}{4-P}.
\]
Then Lemma~\ref{lem:xy_quad} ensures that $Y_{\infty}(y)=Q(y)$.
Lemma~\ref{lem:xy_quad} and the relation $X+1/X+1=3/(P-1)$ ensure
that $Y\equiv Y(y)$ is the algebraic generating function specified by
\begin{equation}\label{eq:Yquad}
  Y+\frac{1}{Y}=\frac{1}{Q-1}.
\end{equation}

To summarize we obtain:

\begin{proposition}
  For each $i\geq 1$ and $k\geq 2$, the generating function $\Gki(y)$
  has the expression (with $Y_{\infty}=Q$):
  \[
  \Gki(y)=Y_{i+1}-Y_i,\ \ \ \mathrm{where}\ Y_i=Y_{\infty}\frac{(1-Y^i)(1-Y^{i+3})}{(1-Y^{i+1})(1-Y^{i+2})}.
  \]
\end{proposition}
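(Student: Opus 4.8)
The plan is to assemble the relations established in the preceding discussion, all of which are already in place. The starting ingredients are the earlier proposition giving $\Fki(x) = X_{i+1} - X_i$, the substitution identity \eqref{eq:FGquad}, which under $y = x(1+f)^2$ reads $\Fki(x) = (1+f)\,\Gki(y)$, and the definitions $Y_i(y) := X_i(x)/(1+f)$, $Y_{\infty}(y) := X_{\infty}(x)/(1+f)$, $Y(y) := X(x)$. These rescalings are chosen precisely so that dividing the quadrangulation formulas by $(1+f)$ preserves their shape.

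First I would obtain $\Gki(y) = Y_{i+1} - Y_i$ directly: dividing $\Fki = X_{i+1} - X_i$ by $(1+f)$ and using $\Gki(y) = \Fki(x)/(1+f)$ gives the claim at once, since $Y_j = X_j/(1+f)$ for each index~$j$. Next I would check that the product formula for $X_i$ descends to $Y_i$. The point is that in $X_i = X_{\infty}\frac{(1-X^i)(1-X^{i+3})}{(1-X^{i+1})(1-X^{i+2})}$ the only factor carrying a power of $(1+f)$ is the overall multiplier $X_{\infty}$; the remaining rational factor depends only on $X = Y$ and is therefore untouched by the rescaling. Dividing by $(1+f)$ thus turns $X_{\infty}$ into $Y_{\infty}$ and leaves the rational factor intact, yielding $Y_i = Y_{\infty}\frac{(1-Y^i)(1-Y^{i+3})}{(1-Y^{i+1})(1-Y^{i+2})}$.

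It then remains to identify $Y_{\infty}$ and $Y$ as functions of $Q(y)$, which is where Lemma~\ref{lem:xy_quad} enters. Using the rational expression $f = P(4-P)/3 - 1$, I would first compute $Y_{\infty}(y) = X_{\infty}(x)/(1+f) = 3/(4-P)$, and then apply the substitution $P \mapsto 4 - 3/Q$ supplied by the lemma to conclude $Y_{\infty} = Q$. Applying the same substitution to the defining relation $X + 1/X + 1 = 3/(P-1)$ for $X$ produces the relation \eqref{eq:Yquad}, namely $Y + 1/Y = 1/(Q-1)$, for $Y$.

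The proposition is a summary, so there is no conceptual obstacle; the one step deserving genuine care is the invocation of Lemma~\ref{lem:xy_quad}. I would need to verify that the change of variable $y = x(1+f)^2$ really is equivalent to $P = 4 - 3/Q$ with $Q = 1 + yQ^3$, and that it sends $X_{\infty}$ to $Q$ and the $X$-relation to the $Y$-relation. This rests on the algebraic identity $X_{\infty}/(1+f) = 3/(4-P)$ together with the expression for $f$, both of which I would confirm by direct substitution.
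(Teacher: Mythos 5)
Your proposal is correct and takes essentially the same route as the paper: the paper's own derivation of this proposition is exactly the assembly of the substitution identity \eqref{eq:FGquad}, the rescaled quantities $Y_i=X_i/(1+f)$, $Y_{\infty}=X_{\infty}/(1+f)$, $Y=X$ (which immediately give $\Gki(y)=Y_{i+1}-Y_i$ and the product formula), and Lemma~\ref{lem:xy_quad} applied to $Y_{\infty}=3/(4-P)$ and to $X+1/X+1=3/(P-1)$ to obtain $Y_{\infty}=Q$ and \eqref{eq:Yquad}. No gaps to report.
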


\subsection{Symmetric irreducible quadrangular dissections}

\begin{figure}
  \centering
  \subfigure[The maximal non empty 4-cycles,]{\qquad\includegraphics[page=1]{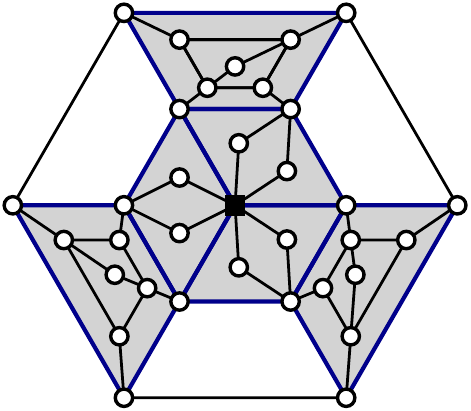}\qquad}
  \qquad
  \subfigure[and the irreductible core.]{\includegraphics[page=2]{irreductibleCore}}\qquad
  \caption{A 3-symmetric simple quadrangulation and its irreductible 3-symmetric core, obtained
    by emptying all maximal 4-cycles into faces.}
  \label{fig:irrcore}
\end{figure}

We now use a substitution approach at faces (instead of edges) to get
an expression for $\Hki(z)$ for $k\geq 3$ and $i>0$.
The \emph{core} of $\gamma\in\cGki$ is obtained by emptying each
maximal (for the enclosed area) $4$-cycle of $\gamma$ (note that no
$4$-cycle strictly encloses the central vertex, according to
Remark~\ref{rk:encloses}), see Figure~\ref{fig:irrcore}.  This yields a symmetric irreducible
dissection $\kappa\in\cHki$, and the distance of the pointed vertex to
the outer boundary is still $i$ (because there is no way of shortening
this distance by travelling inside a $4$-cycle). Conversely each
$\gamma\in\cGki$ is uniquely obtained from $\kappa\in\cHki$ 
 where at each face a rooted simple quadrangulation
(with at least one inner face) is patched, in a way that respects the
symmetry of order $k$ (i.e., the $k$ faces of an orbit undergo the
same patching operation).  Denoting by $g\equiv g(y)$ the series of
rooted simple non-degenerated quadrangulations according to the number of inner faces, we
obtain, for $k\geq 3$ and $i>0$:
\[
\Gki(y)=\Hki(g(y)). 
\]
Again the series $g\equiv g(y)$ is well known to be algebraic, having a rational
expression in terms of $Q$: 
$g=3Q-Q^2-2$.  

Define the algebraic series $R\equiv R(z)$ by
\begin{equation}
  R=z+R^2.
\end{equation}

\begin{lemma}\label{lem:yz_quad}
  Let $L(y)$ and let $M(z)$ be related by $L(y)=M(g(y))$. Let
  $\widehat{L}(Q)$ and $\widehat{M}(R)$ be the expressions of $L(y)$
  and $M(z)$ in terms of $Q\equiv Q(y)$ and $R\equiv R(z)$, i.e.,
  $L(y)=\widehat{L}(Q(y))$ and $M(z)=\widehat{M}(R(z))$. Then
  \[
  \widehat{M}(R)=\widehat{L}(R+1).
  \]
\end{lemma}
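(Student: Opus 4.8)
The plan is to follow the same template as the proof of Lemma~\ref{lem:xy_quad}: read off the change-of-variable relation $z=g(y)$ at the level of the uniformizing series $R$ and $Q$, and show that it takes the simple closed form $R=Q-1$. Concretely, I would begin from the defining equation $R=z+R^2$ and, using the rational expression $g=3Q-Q^2-2$ recalled just above, substitute $z=g(y)=3Q-Q^2-2$ into it, so as to identify which root of the resulting quadratic in $R$ corresponds to $R(g(y))$.

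The computational core is then a one-line verification that $R=Q-1$ solves this equation. With $z=3Q-Q^2-2$ one has
\[
z+R^2=(3Q-Q^2-2)+(Q-1)^2=(3Q-Q^2-2)+(Q^2-2Q+1)=Q-1=R,
\]
so $Q-1$ is indeed a root of $w=z+w^2$. Since $Q(0)=1$ (from $Q=1+yQ^3$), this candidate satisfies $Q-1\to0$ as $y\to0$, hence as $z=g(y)\to0$; it therefore coincides with the unique power-series solution $R(z)$ of $R=z+R^2$ vanishing at the origin. This establishes the key identity $R(g(y))=Q(y)-1$.

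It then remains to translate this back into the stated functional identity. By definition $M(z)=\widehat{M}(R(z))$ and $L(y)=\widehat{L}(Q(y))$, and the hypothesis $L(y)=M(g(y))$ reads $\widehat{L}(Q(y))=\widehat{M}(R(g(y)))=\widehat{M}(Q(y)-1)$ after invoking the relation just proved. As $y$ ranges over a neighbourhood of $0$, the value $Q(y)$ ranges over a neighbourhood of $1$, so the equality $\widehat{L}(Q)=\widehat{M}(Q-1)$ holds identically as a power series in $Q$; setting $Q=R+1$ gives $\widehat{M}(R)=\widehat{L}(R+1)$, which is the claim.

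I expect the only genuinely substantive point — and the one worth stating explicitly — to be the branch selection in the second step: one must use the initial conditions $R(0)=0$ and $Q(0)=1$ to pin $R(g(y))$ to the root $Q-1$ rather than to its conjugate $2-Q$ (the two roots of $w^2-w+z=0$ summing to $1$, with $2-Q\to1\neq0$ at $y=0$). Everything else reduces to the displayed algebraic identity and the substitution $Q=R+1$, exactly paralleling Lemma~\ref{lem:xy_quad}.
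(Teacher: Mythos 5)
Your proposal is correct and takes essentially the same route as the paper: the paper's proof simply completes the square, writing $z=3Q-Q^2-2=(Q-1)-(Q-1)^2$, and sets $R=Q-1$, which is your displayed verification read in reverse. The extra points you make explicit (pinning the branch via $R(0)=0$, $Q(0)=1$, and the final translation from $R(g(y))=Q(y)-1$ to $\widehat{M}(R)=\widehat{L}(R+1)$) are left implicit in the paper but are consistent with it.
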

\begin{proof}
  The change of variable relation is $z=g(y)$.  We have
  \[
  z=g(y)=3Q-Q^2-2=-(Q-1)^2+(Q-1)
  \]
  Hence, if we write $R\equiv R(z)=Q(y)-1$, we have $z=-R^2+R$, so
  that $R=z+R^2$. In addition $Q(y)=R(z)+1$.
\end{proof}

For the rest of this subsection, when we have $z$ and $y$ together in
an equation we assume $z$ and $y$ to be related by $z=g(y)$.  Define
$Z_{\infty}(z):=Y_{\infty}(y)$, $Z_i(z):=Y_i(y)$, and $Z(z):=Y(y)$.
The expression of $Y_i(y)$ in terms of $Y_{\infty}(y)$ and $Y(y)$
ensures that (we do not need Lemma~\ref{lem:yz_quad} at this step):
\[
Z_i=Z_{\infty}\frac{(1-Z^i)(1-Z^{i+3})}{(1-Z^{i+1})(1-Z^{i+2})}.
\]
In addition, since $\Hki(z)=\Gki(y)$, we have
\[
\Hki(z)=Z_{i+1}-Z_i.
\]

We now apply Lemma~\ref{lem:yz_quad} to get an algebraic expression of
$\Hki(z)$, written in terms of $R(z)$. We have
\[
Z_{\infty}(z)=Y_{\infty}(y)=Q(y)=R(z)+1.
\]
Lemma~\ref{lem:yz_quad} and the relation $Y+1/Y=1/(Q-1)$ also ensure
that $Z\equiv Z(z)$ is the algebraic generating function specified by
\begin{equation}\label{eq:Zquad}
  Z+\frac{1}{Z}=\frac{1}{R}.
\end{equation}
  
To summarize we obtain:

\begin{proposition}
  For each $i\geq 1$ and $k\geq 3$, the generating function $\Hki(z)$
  has the expression (with $Z_{\infty}=R+1$):
  \[
  \Hki(z)=Z_{i+1}-Z_i,\ \ \ \mathrm{where}\ Z_i=Z_{\infty}\frac{(1-Z^i)(1-Z^{i+3})}{(1-Z^{i+1})(1-Z^{i+2})}.
  \]
\end{proposition}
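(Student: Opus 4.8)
The plan is to read this proposition as the summary of the three identities just assembled in this subsection, so that almost all the content lives in the \emph{setup} rather than in the statement. I would organise the argument in three moves, corresponding exactly to the three displayed facts to be recorded.

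First I would establish the face-substitution relation $\Hki(z)=\Gki(y)$ under $z=g(y)$. The point is that emptying every maximal $4$-cycle of a $\gamma\in\cGki$ yields a unique irreducible core $\kappa\in\cHki$, and conversely $\gamma$ is recovered from $\kappa$ by patching a rooted simple non-degenerated quadrangulation into each face in an order-$k$ symmetric fashion; translating this into series gives $\Gki(y)=\Hki(g(y))$. Two things must be checked carefully here: that no $4$-cycle strictly encloses the central vertex (which is Remark~\ref{rk:encloses}, so that emptying is well defined and symmetry-respecting), and that the radial distance $i$ is preserved, since a shortest path from the center to the outer boundary cannot be shortened by detouring through the interior of a $4$-cycle. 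This is where the genuine work lies.

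Second, with the relabellings $Z_\infty(z):=Y_\infty(y)$, $Z_j(z):=Y_j(y)$ and $Z(z):=Y(y)$ read under $z=g(y)$, the two facts already proved for the simple case carry over verbatim: from $\Gki(y)=Y_{i+1}-Y_i$ I get $\Hki(z)=Z_{i+1}-Z_i$, and the product expression of $Y_i$ in terms of $Y_\infty$ and $Y$ becomes the stated product expression of $Z_i$ in terms of $Z_\infty$ and $Z$. No computation is needed at this step, only substitution of the defining relabelling; in particular Lemma~\ref{lem:yz_quad} is not yet required.

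Third I would identify the two auxiliary series concretely in terms of $R(z)$, where $R=z+R^2$. Invoking Lemma~\ref{lem:yz_quad} for the change of variable $z=g(y)$ --- whose mechanism is the rational identity $g=3Q-Q^2-2$, equivalently $Q=R+1$ --- I obtain $Z_\infty=Y_\infty=Q=R+1$ from $Y_\infty=Q$, and I obtain the characterization $Z+1/Z=1/R$ of \eqref{eq:Zquad} by substituting $Q-1=R$ into the relation $Y+1/Y=1/(Q-1)$. These two facts pin down the series occurring in the statement, and the proposition follows by collecting the three moves. The only step that is not purely formal is the first; everything afterwards is a relabelling under $z=g(y)$ together with the already-proved Lemma~\ref{lem:yz_quad}.
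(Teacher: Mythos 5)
Your proposal matches the paper's own derivation step for step: the face-substitution relation $\Gki(y)=\Hki(g(y))$ justified by Remark~\ref{rk:encloses} and preservation of the radial distance, the purely formal relabelling of the $Y$-series into $Z$-series under $z=g(y)$ (correctly noting that Lemma~\ref{lem:yz_quad} is not needed there), and the final identifications $Z_{\infty}=R+1$ and $Z+1/Z=1/R$ via Lemma~\ref{lem:yz_quad}. It is correct and essentially identical to the paper's argument.
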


\section{Radial distance of symmetric triangular dissections}\label{sec:dist_trig}

\subsection{Symmetric triangular dissections}
Define the algebraic generating function $P\equiv P(x)$ by
\begin{equation}\label{eq:defP_triang}
  P^2=1+8xP^3,
\end{equation}
and define the algebraic generating function $X\equiv X(x)$ by
\begin{equation}\label{eq:defX_triang}
  X+\frac{1}{X}+2=\frac{8}{P^2-1}.
\end{equation}
Define also $X_{\infty}\equiv X_{\infty}(x)$ and $A_{\infty}\equiv A_{\infty}(x)$ by 
\begin{equation}
  X_{\infty}=P,\ \ A_{\infty}\ \!\!^2=\frac{2P(P-1)}{(1+P)}.
\end{equation}
As for quadrangulations the following result is very closely
related to a result by Bouttier and Guitter~\cite{BoGu12}.

\begin{proposition}
  For each $i\geq 1$ and $k\geq 2$, the generating function $\Fki(x)$ has the following 
  expression (which does not depend on $k$):
\[
\Fki(x)=X_{i+1}-X_{i-1}+A_i\ \!\!^2-A_{i-1}\ \!\!^2,
\]
where
\[
X_i = X_{\infty} \cdot \frac{(1-X^i)(1-X^{i+2})}{(1-X^{i+1})^2}, \quad
A_i=A_{\infty}\cdot\Big(1-\frac{P+1}{4}X^i\frac{(1-X)(1-X^2)}{(1-X^{i+1})(1-X^{i+2})}\Big).
\]
\end{proposition}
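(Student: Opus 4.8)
The plan is to follow, almost verbatim, the strategy used for the analogous quadrangular statement: reduce $\Fki$ to a two-point function of triangulations through the classical quotient, and then invoke the computation of Bouttier and Guitter~\cite{BoGu12}. The only genuine work is the reduction itself together with the matching of conventions; the closed form $X_{i+1}-X_{i-1}+A_i^2-A_{i-1}^2$ is then read off from~\cite{BoGu12}.

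First I would take the $k$-quotient $\sE$ of a dissection $\sD\in\cFki$. By Lemma~\ref{lem:quotient_count} (with $o(\sD)=k$, so $o(\sE)=1$) the quotient $\sE$ is a triangular $1$-dissection, \ie a triangulation of the disk whose boundary is a single loop; and if $\sD$ has $(2n+1)k$ inner faces then $\sE$ has $2n+1$ triangular inner faces, so that the size variable $x$ is preserved. By Lemma~\ref{lem:quotient_distance} the radial distance is preserved as well, so the pointed centre of $\sE$ lies at distance $i$ from the unique vertex of the boundary loop. Conversely every pointed triangular $1$-dissection with its centre at radial distance $i$ arises from a unique $\sD\in\cFki$, this being the classical quotient correspondence of Section~\ref{sub:clasquo} specialised to outer degree $1$. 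Hence $\Fki(x)$ equals the generating function of triangulations carrying a boundary of length one and a marked vertex at distance $i$ from that boundary --- that is, exactly the two-point function of triangulations. Note that, in contrast with the quadrangular case, the boundary loop cannot be collapsed into a marked vertex while staying within triangulations (collapsing a monogon turns its incident face into a bigon), which is why one keeps the length-one boundary here rather than the marked edge obtained in the quadrangular reduction.

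It then remains to quote the explicit algebraic expression of this two-point function from~\cite{BoGu12}, after checking that, in our size variable, their parameters specialise to the series $P$, $X$, $X_{\infty}$ and $A_{\infty}$ fixed by~\eqref{eq:defP_triang} and~\eqref{eq:defX_triang}. The appearance of \emph{two} families $X_i$ and $A_i$, rather than the single family of the quadrangular case, is the signature of the non-bipartiteness of triangulations: in the continued-fraction/slice approach of~\cite{BoGu12} the distance-generating functions split into two coupled families according to a parity, and the two-point function is their combination, which here takes the stated form (the shift by $2$, instead of the shift by $1$ of the bipartite quadrangular formula, being the same parity effect, corrected by the $A_i^2-A_{i-1}^2$ contribution).

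The main obstacle is precisely this last matching step: identifying our pointed triangular $1$-dissections with the exact family whose two-point function is computed in~\cite{BoGu12} (their rooting/boundary normalisation, and the precise meaning of ``distance $i$''), and translating their parametrisation into ours so that the two auxiliary series coincide with $X_i$ and $A_i$ as defined here. In particular one must track the degenerate configurations allowed in the general (non quasi-simple) case --- loops and $2$-cycles enclosing the centre, controlled by Remark~\ref{rk:encloses} --- in order to be certain that the bijection and the distance are as claimed, and verify the boundary cases of small $i$ (for instance the meaning of $X_{i-1}$ and $A_{i-1}$ at $i=1$, where $X_0=0$ but $A_0\neq 0$).
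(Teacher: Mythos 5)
There is a genuine gap, and it sits exactly where you parked it as ``the main obstacle.'' Your reduction via the $k$-quotient is the same first step as the paper's, and your observation that the monogon boundary cannot be collapsed (unlike the $2$-gon in the quadrangular case) is correct. But your next assertion --- that $\Fki(x)$ is therefore \emph{exactly} a two-point function computed in~\cite{BoGu12}, so that the closed form can be ``read off'' --- is not right. The results of~\cite{BoGu12} that the paper invokes are two-point functions of triangulations \emph{of the sphere} with a pointed vertex and a marked (oriented) edge: the series $U_i$ (endpoints of the marked edge at distances $i$ and $i-1$ from the pointed vertex, equal to $X_i-X_{i-1}$) and $V_i$ (both endpoints at distance $i$, equal to $A_i^2-A_{i-1}^2$). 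Pointed triangular $1$-dissections, i.e.\ triangulations with a monogon boundary, are not among these families, so there is nothing to quote directly.

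The missing idea is the decomposition that bridges this gap: delete the outer loop of the quotient $1$-dissection, producing a map whose outer face has degree $2$. Its contour is either a genuine $2$-gon on vertices $v,v'$ --- and then $v'$ lies at distance $i-1$, $i+1$, or $i$ from the pointed vertex, contributing $U_i$, $U_{i+1}$, and $V_{i,\mathrm{distinct}}$ respectively --- or it consists of two adjacent loops, in which case cutting yields an ordered pair of $1$-dissections which, pasted back on the sphere, give a triangulation with a marked oriented loop, contributing $V_{i,\mathrm{loop}}$. Summing,
\begin{equation*}
\Fki \;=\; U_i + U_{i+1} + V_{i,\mathrm{distinct}} + V_{i,\mathrm{loop}} \;=\; U_i+U_{i+1}+V_i \;=\; X_{i+1}-X_{i-1}+A_i^2-A_{i-1}^2 .
\end{equation*}
So the stated formula is not a single two-point function but a combination of three of them, with the shift by $2$ arising from $U_i+U_{i+1}$ telescoping and the $A$-terms coming from the equal-distance and loop configurations --- not from a ``parity effect'' one can wave at. Without this case analysis your argument never produces the right-hand side; everything after your (correct) quotient step is deferred rather than proved.
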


\begin{proof}
  We recall the result in~\cite{BoGu12} about triangulations:
  \begin{itemize}
  \item the series $U_i$ of triangulations of the sphere with a
    pointed vertex $u$ and a marked edge $e=(v,v')$ with $v$ at
    distance $i$ and $v'$ at distance $i-1$ from $u$ is given by
    $U_i=X_i-X_{i-1}$,
  \item the series $V_i$ of triangulations of the sphere with a
    pointed vertex $u$ and a marked oriented edge $e=(v,v')$ with $v$
    and $v'$ at distance $i$ from $u$ is given by $V_i=A_i\
    \!\!^2-A_{i-1}\ \!\!^2$. (Note that the series $V_i$ decomposes as
    $V_{i,\mathrm{loop}}+V_{i,\mathrm{distinct}}$, whether the
    extremities of the marked edge are equal or distinct.)
  \end{itemize}

  The $k$-quotient of a dissection in $\cF_i^{(k)}$ is a triangular
  1-dissection $D$ with a pointed inner vertex $u$ at
  distance $i$ from the outer loop, according to
  Lemma~\ref{lem:quotient_distance}. The vertex incident to the outer
  loop is called the \emph{root-vertex} and denoted by $v$.  If we
  delete the outer loop we obtain a map $\widetilde{D}$ with an outer
  face of degree $2$ and all inner faces of degree $3$.  Two cases can
  occur: either the outer face contour of $\widetilde{D}$ is a $2$-gon
  or is made of two adjacent loops. In the first case let $v'$ be the
  other vertex of the $2$-gon: $v'$ is at distance either $i-1$,
  $i+1$, or $i$ from $u$, giving respective contributions
  $U_i$, $U_{i+1}$, $V_{i,\mathrm{distinct}}$. In the second case we have an
  ordered pair $D_1,D_2$ of 1-dissections, and one of
  these two dissections contains a marked vertex at distance $i$ from
  $v$. Orient the outer loop of $D_1$ clockwise and the outer loop of
  $D_2$ counterclockwise, and paste $D_1$ and $D_2$ together at
  their outer loops. This yields a triangulation of the sphere with a
  pointed vertex $u$ and a marked oriented loop whose incident vertex
  is at distance $i$ from $u$.  The corresponding contribution is
  $V_{i,\mathrm{loop}}$.  Gathering all cases we obtain
  $\Fki=U_i+U_{i+1}+V_{i,\mathrm{distinct}}+V_{i,\mathrm{loop}}=U_i+U_{i+1}+V_i$.
\end{proof}

\subsection{Symmetric simple triangular dissections}
 Call a rooted triangulation \emph{simply-rooted} if the root-edge is
not a loop.  To compute $y\mapsto\Gki(y)$, we proceed very similarly
as for quadrangulations.  Each $\gamma\in\cFki$ (for $k\geq 3$) is
uniquely obtained from $\kappa\in\cGki$ ---with $(2n+1)k$ inner
faces--- where each of the $(3n+2)k$ edges is either left unchanged or
blown into a double edge in the interior of which a simply-rooted
triangulation is patched, in a way that respects the symmetry of order
$k$ (that is, the $k$ edges in an orbit of edges of $\kappa$ undergo
the same substitution operation). Denoting by $f\equiv f(x)$ the
series of simply-rooted triangulations according to half the number of
faces, we obtain $\Fki(x)=\sum_{n\geq 1}[y^n]\Gki(y)\cdot
x^n\cdot(1+f)^{3n+2}$. In other words,
\begin{equation}\label{eq:FGtriang}
  \Fki(x)=(1+f)^2\cdot\Gki(x\cdot(1+f)^3). 
\end{equation}
The series $f=f(x)$ is algebraic, 
with a rational expression in terms of $P$: $1+f=-P(P^2-9)/8$.

Define $Q\equiv Q(y)$ as the algebraic series in $y$ given by
\begin{equation}
  Q=\frac{y}{(1-Q)^3},
\end{equation}
and define also $\tQ\equiv \tQ(y)$ as $\tQ:=(1+8Q)^{1/2}$.

\begin{lemma}\label{lem:xy_triang}
  Let $K(x)$ and $L(y)$ be related by $K(x)=L(x(1+f)^3)$. Let
  $\widehat{K}(P)$ and $\widetilde{L}(\tQ)$ be the expressions of
  $K(x)$ and $L(y)$ in terms of $P\equiv P(x)$ and $\tQ\equiv \tQ(y)$,
  i.e., $K(x)=\widehat{K}(P(x))$ and
  $L(y)=\widetilde{L}(\tQ(y))$. Then the expressions are the same,
  i.e.,
  \[
  \widetilde{L}=\widehat{K}.
  \]
\end{lemma}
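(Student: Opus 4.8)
The plan is to mirror exactly the computation of Lemma~\ref{lem:xy_quad}, since the statement has the identical shape: a change of variable $y=x(1+f)^3$ is being re-encoded in terms of the natural ``tree'' series $P(x)$ and $\tQ(y)$, and the claim is that the two expressions simply coincide. First I would write down the change-of-variable relation $y=x(1+f)^3$ explicitly, substituting $x=(P^2-1)/(8P^3)$ (which follows by solving~\eqref{eq:defP_triang}) and the given rational expression $1+f=-P(P^2-9)/8$. This reduces $y$ to a rational function of $P$ alone, the analogue of the display $y=\frac{P-1}{3}\big(\frac{4-P}{3}\big)^2$ in the proof of Lemma~\ref{lem:xy_quad}.

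**Identifying the right auxiliary variable.**

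The key algebraic step is to recognize which combination of $P$ equals $\tQ$. Since $\tQ=(1+8Q)^{1/2}$ and $Q=y/(1-Q)^3$, I expect that after simplification $y$ as a function of $P$ will factor so that setting $\tQ(y)=P(x)$ makes the defining relation $Q=y/(1-Q)^3$ hold automatically. Concretely, I would posit $\tQ=P$, so that $Q=(P^2-1)/8$ by the definition $\tQ^2=1+8Q$, and then verify directly that $Q=y/(1-Q)^3$ is equivalent to the rational identity for $y$ in terms of $P$ obtained in the first step. This is the heart of the matter and the one place a genuine computation is needed: one must check that the polynomial identity
\[
\frac{P^2-1}{8}=\frac{y}{\left(1-\tfrac{P^2-1}{8}\right)^3}
\]
reduces, after clearing denominators and substituting $y=x(1+f)^3$ with the expressions above, to a true identity in $P$.

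**Concluding the equality of expressions.**

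Once $\tQ(y)=P(x)$ is established under the relation $y=x(1+f)^3$, the conclusion is immediate: by hypothesis $K(x)=L(y)$, and by definition $K(x)=\widehat{K}(P(x))$ while $L(y)=\widetilde{L}(\tQ(y))$; since $P(x)=\tQ(y)$ as series linked by the change of variable, we get $\widehat{K}(P)=\widetilde{L}(P)$ as formal expressions in one variable, i.e.\ $\widetilde{L}=\widehat{K}$. I expect the main obstacle to be purely the bookkeeping in verifying $\tQ=P$: the factor $-P(P^2-9)/8$ is cubed in $1+f$, and combined with $x=(P^2-1)/(8P^3)$ the numerator and denominator must collapse so that the cube root structure of $Q=y/(1-Q)^3$ matches the square-root definition $\tQ^2=1+8Q$. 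The reason to introduce $\tQ$ rather than $Q$ itself is precisely that $P$ satisfies a relation quadratic in $P$ (namely $P^2=1+8xP^3$), so it is $\tQ=(1+8Q)^{1/2}$, not $Q$, that lines up cleanly with $P$; confirming this alignment is the content of the lemma.
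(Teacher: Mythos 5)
Your proposal is correct and follows essentially the same route as the paper: express $y=x(1+f)^3$ as a rational function of $P$, observe that $Q=(P^2-1)/8$ then satisfies the defining equation $y=Q(1-Q)^3$ (hence equals $Q(y)$ by uniqueness of the power-series solution), and conclude $\tQ(y)=P(x)$, which immediately gives $\widetilde{L}=\widehat{K}$. The only difference is cosmetic — you posit $\tQ=P$ and verify, while the paper defines $Q$ from $P$ and deduces $\tQ=P$ — and your verification identity does reduce to a true identity in $P$ exactly as you anticipate.
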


\begin{proof}
  The change of variable relation is $y=x(1+f)^3$.  We have:
  \[
  y=x(1+f)^3=-\frac{1}{2^{12}}(P^2-1)(P^2-9)^3.
  \]
  Hence, if we write $Q\equiv Q(y)=(P(x)^2-1)/8$,    
  we have $y=Q(1-Q)^3$. In addition $P(x)=\tQ(y)$, hence $\widetilde{L}=\widehat{K}$.  
\end{proof}

Define (with $y$ and $x$ related by $y=x(1+f)^3$):
\[
Y_{\infty}(y):=\frac{X_{\infty}(x)}{(1+f)^2},\ Y_i(y):=\frac{X_i(x)}{(1+f)^2},\ Y(y):=X(x),
\]
and define
\[
B_{\infty}(y)=\frac{A_{\infty}(x)}{(1+f)},\ B_i(y):=\frac{A_i(x)}{(1+f)}.
\] 
The expression of $X_i(x)$ in terms of $X_{\infty}(x)$ and $X(x)$
ensures that
\[
Y_i=Y_{\infty}\frac{(1-Y^i)(1-Y^{i+2})}{(1-Y^{i+1})^2}.
\]
The expression of $A_i(x)$ in terms of $A_{\infty}(y)$ and $X(x)$ and
Lemma~\ref{lem:xy_triang} (to replace $P$ by $\tQ$ in the expression)
ensure that
\[
B_i=B_{\infty}\cdot\Big(1-\frac{\tQ+1}{4}Y^i\frac{(1-Y)(1-Y^2)}{(1-Y^{i+1})(1-Y^{i+2})}\Big).
\]

In addition, since $\Gki(y)=\Fki(x)/(1+f)^2$, we have
\[
\Gki(y)=Y_{i+1}-Y_{i-1}+B_i\ \!\!^2-B_{i-1}\ \!\!^2.
\]

Using Lemma~\ref{lem:xy_triang} we obtain (after simplifications)
\begin{equation}
  Y_{\infty}=\frac{1}{\tQ(1-Q)^2},\ \ B_{\infty}\ \!\!^2=\frac{16\cdot Q}{\tQ(1+\tQ)^2(1-Q)^2}.
\end{equation}

Lemma~\ref{lem:xy_triang} and the relation $X+1/X+1=8/(P^2-1)$ also
ensure that $Y\equiv Y(y)$ is the algebraic generating function
specified by
\begin{equation}\label{eq:Ytriang}
  Y+\frac{1}{Y}+2=\frac{1}{Q}.
\end{equation}
  
To summarize we obtain:

\begin{proposition}
  For each $i\geq 1$ and $k\geq 3$, the generating function $\Gki(y)$
  has the following expression (which does not depend on $k$):
  \[
  \Gki(y)=Y_{i+1}-Y_{i-1}+B_i\ \!\!^2-B_{i-1}\ \!\!^2,
  \]
  where
  \[
  Y_i=Y_{\infty}\frac{(1-Y^i)(1-Y^{i+2})}{(1-Y^{i+1})^2},\ \
  B_i=B_{\infty}\cdot\Big(1-\frac{\tQ+1}{4}Y^i\frac{(1-Y)(1-Y^2)}{(1-Y^{i+1})(1-Y^{i+2})}\Big).
  \]
\end{proposition}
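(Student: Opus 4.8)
The plan is to read the statement off from the substitution relation \eqref{eq:FGtriang} together with the formula $\Fki(x)=X_{i+1}-X_{i-1}+A_i^2-A_{i-1}^2$ already obtained for general symmetric triangular dissections. Rewriting \eqref{eq:FGtriang} with the change of variable $y=x(1+f)^3$ gives the basic identity $\Gki(y)=\Fki(x)/(1+f)^2$, valid for $k\geq 3$. I would substitute the formula for $\Fki$ and divide through by $(1+f)^2$. Since by definition $Y_j(y)=X_j(x)/(1+f)^2$ and $B_j(y)=A_j(x)/(1+f)$, so that $B_j^2=A_j^2/(1+f)^2$, each of the four terms rescales correctly and one obtains at once
\[
\Gki(y)=Y_{i+1}-Y_{i-1}+B_i^2-B_{i-1}^2.
\]

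It then remains to cast $Y_i$ and $B_i$ in the claimed closed forms. For $Y_i$ this is automatic: dividing $X_i=X_\infty(1-X^i)(1-X^{i+2})/(1-X^{i+1})^2$ by $(1+f)^2$ replaces the prefactor $X_\infty$ by $Y_\infty$ and leaves the rational factor in $X$ unchanged, and since $Y(y)=X(x)$ by definition this factor is precisely the asserted function of $Y$. For $B_i$ the same division replaces $A_\infty$ by $B_\infty$ and leaves the bracket, which involves $X$ (already equal to $Y$) together with the coefficient $(P+1)/4$. The only genuine manipulation is to re-express $P$ in the variable $\tQ$: as $(P+1)/4$ is a function of $P$ alone, Lemma~\ref{lem:xy_triang}, which states that a series written $\widehat{K}(P)$ has the \emph{identical} expression $\widehat{K}(\tQ)$ in the $y$-variable, permits replacing $P$ by $\tQ$, producing the coefficient $(\tQ+1)/4$ and hence the stated form of $B_i$.

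The substantive content thus lies in two earlier inputs, where any real difficulty is located rather than in the present assembly. The first is the substitution relation \eqref{eq:FGtriang}: one must verify that each $\gamma\in\cFki$ is uniquely recovered from a simple core $\kappa\in\cGki$ by blowing up each edge-orbit into a double edge carrying a patched simply-rooted triangulation (or leaving it unchanged) in a way respecting the order-$k$ symmetry, that this preserves the radial distance $i$, and that a $k$-symmetric core of size $n$ has $(3n+2)k$ edges grouped into $3n+2$ orbits (which produces the exponent $3n+2$, and hence the factors $(1+f)^2$ and $(1+f)^3$). The second is the consistency of the reparametrization: I would check that $y=x(1+f)^3$ is the correct change of variable, using the rational expression $1+f=-P(P^2-9)/8$, and that the defining relation \eqref{eq:Ytriang} for $Y$ follows from \eqref{eq:defX_triang} via Lemma~\ref{lem:xy_triang} (since $Q=(P^2-1)/8$, the right-hand side $8/(P^2-1)$ becomes $1/Q$). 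Granting these, the proposition merely gathers the displayed identities established in the preceding paragraphs, and I expect no further obstacle.
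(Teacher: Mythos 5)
Your proposal is correct and follows essentially the same route as the paper: the proposition is indeed just the assembly of the substitution relation \eqref{eq:FGtriang}, the rescaling definitions $Y_i=X_i/(1+f)^2$, $B_i=A_i/(1+f)$, $Y=X$, and Lemma~\ref{lem:xy_triang} (via $P(x)=\tQ(y)$) to turn the coefficient $(P+1)/4$ into $(\tQ+1)/4$. You also correctly locate the real content in the earlier inputs (the edge-substitution bijection with its $(3n+2)k$ edge count, and the change of variable $y=x(1+f)^3$), which is exactly how the paper structures the argument.
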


\subsection{Symmetric irreducible triangular dissections} 
 We now use a substitution approach at faces to get an expression for
$\Hki(z)$ for $k\geq 4$ and $i>0$.  Similarly as in the quadrangulated
case, each $\gamma\in\cGki$ (for $k\geq 4$) is uniquely obtained from
$\kappa\in\cHki$ ---with $(2n+1)k$ inner faces--- where at each face a
rooted simple triangulation (with at least one inner face) is patched,
in a way that respects the symmetry of order $k$ (i.e., the $k$ faces
of an orbit undergo the same patching operation).  Denoting by
$g\equiv g(y)$ the series of rooted simple triangulations according to
half the number of faces, we obtain, for $k\geq 4$ and $i>0$:
\[
\Gki(y)=\frac{g}{y}\Hki(g^2/y). 
\]
Again the series $g\equiv g(y)$ is well known to be algebraic, having a rational
expression in terms of $Q$: 
$g=Q-2Q^2$.  

Define the algebraic series $R\equiv R(z)$ by
\begin{equation}
  R=\frac{z}{(1-R)^2}.
\end{equation}
Define also $\tR\equiv\tR(z)$ as $\tR:=\sqrt{1+9R}/\sqrt{1+R}$.

\begin{lemma}\label{lem:yz_triang}
  Let $L(y)$ and let $M(z)$ be related by $L(y)=M(g(y)^2/y)$.  Let
  $\widetilde{L}(\tQ)$ and $\widetilde{M}(\tR)$ be the expressions of
  $L(y)$ and $M(z)$ in terms of $\tQ\equiv \tQ(y)$ and $\tR\equiv
  \tR(z)$, i.e., $L(y)=\widetilde{L}(\tQ(y))$ and
  $M(z)=\widetilde{M}(\tR(z))$. Then the expressions are the same,
  i.e.,
  \[
  \widetilde{M}=\widetilde{L}.
  \]
\end{lemma}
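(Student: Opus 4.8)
The plan is to mimic the proofs of Lemmas~\ref{lem:xy_triang} and~\ref{lem:yz_quad}. The hypothesis $L(y)=M(g(y)^2/y)$ together with the change of variable $z=g(y)^2/y$ gives $L(y)=M(z)$, that is $\widetilde{L}(\tQ(y))=\widetilde{M}(\tR(z))$. Hence it suffices to show that the two parametrizing series coincide under this change of variable, namely $\tQ(y)=\tR(z)$; the desired equality $\widetilde{M}=\widetilde{L}$ of expressions then follows immediately.

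First I would rewrite the change of variable in terms of $Q$. Using the rational expression $g=Q-2Q^2=Q(1-2Q)$ and the relation $y=Q(1-Q)^3$ coming from $Q=y/(1-Q)^3$, a direct computation gives
\[
z=\frac{g^2}{y}=\frac{Q^2(1-2Q)^2}{Q(1-Q)^3}=\frac{Q(1-2Q)^2}{(1-Q)^3}.
\]
Next, guided by the analogy with Lemma~\ref{lem:yz_quad} and by working backward from the target identity, I would set $R:=Q/(1-Q)$. Then $1-R=(1-2Q)/(1-Q)$, so that
\[
R(1-R)^2=\frac{Q}{1-Q}\cdot\frac{(1-2Q)^2}{(1-Q)^2}=\frac{Q(1-2Q)^2}{(1-Q)^3}=z,
\]
which is exactly the defining relation $R=z/(1-R)^2$. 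This confirms that $R(z)=Q(y)/(1-Q(y))$ is the correct series attached to $z$.

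Finally I would verify $\tQ=\tR$. From $R=Q/(1-Q)$ one gets $1+9R=(1+8Q)/(1-Q)$ and $1+R=1/(1-Q)$, hence
\[
\tR^2=\frac{1+9R}{1+R}=1+8Q=\tQ^2 ,
\]
and taking the positive square roots (both series equal $1$ at the origin) gives $\tR(z)=\tQ(y)$. Combined with $L(y)=M(z)$, this yields $\widetilde{L}=\widetilde{M}$.

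The only genuinely non-mechanical step is identifying the substitution $R=Q/(1-Q)$; once it is in hand, everything reduces to the routine algebraic verifications above. I expect the sole point deserving a word of care to be the branch choice in the square roots defining $\tQ=(1+8Q)^{1/2}$ and $\tR=\sqrt{1+9R}/\sqrt{1+R}$, but since both $Q$ and $R$ vanish at $y=z=0$ the two series are $1$ there and the positive branch is consistent, so this causes no difficulty.
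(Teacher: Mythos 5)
Your proposal is correct and follows essentially the same route as the paper's own proof: compute $z=Q(1-2Q)^2/(1-Q)^3$, recognize $R=Q/(1-Q)$ as the solution of $R=z/(1-R)^2$, and check $\tQ=\tR$, whence $\widetilde{M}=\widetilde{L}$. The only cosmetic difference is that you verify $\tR^2=\tQ^2$ starting from $R=Q/(1-Q)$ while the paper inverts to $Q=R/(1+R)$ first; your added remark on the branch choice of the square root is a fair point the paper leaves implicit.
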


\begin{proof}
  The change of variable relation is $z=g(y)^2/y$.  We have
  \[
  z=g(y)^2/y=Q\frac{(1-2Q)^2}{(1-Q)^3}.
  \]
  Hence, if we write $R\equiv R(z)=Q(y)/(1-Q(y))$, we have
  $z=R(1-R)^2$, so that $R=z/(1-R)^2$. In addition we have
  $Q(y)=R(z)/(1+R(z))$, so that
  $\tQ(y)=(1+8Q(y))^{1/2}=\sqrt{1+9R(z)}/\sqrt{1+R(z)}=\tR(z)$.  Hence
  $\widetilde{M}=\widetilde{L}$.
\end{proof}

Define (with $z$ and $y$ related by $z=g(y)^2/y$):
\[
Z_{\infty}(z):=\frac{y}{g}Y_{\infty}(y),\ Z_i(z):=\frac{y}{g}Y_i(y),\
Z(z):=Y(y),
\]
and define
\[
C_{\infty}(z)^2:=\frac{y}{g}B_{\infty}(y)^2,\
C_i(z)^2:=\frac{y}{g}B_i(y)^2.
\] 
The expression of $Y_i(y)$ in terms of $Y_{\infty}(y)$ and $Y(y)$
ensures that
\[
Z_i=Z_{\infty}\frac{(1-Z^i)(1-Z^{i+2})}{(1-Z^{i+1})^2}.
\]
The expression of $B_i(y)$ in terms of $B_{\infty}(y)$ and $Y(y)$ and
Lemma~\ref{lem:yz_triang} (to replace $\tQ$ by $\tR$ in the
expression) ensure that
\[
C_i=C_{\infty}\cdot\Big(1-\frac{\tR+1}{4}Z^i\frac{(1-Z)(1-Z^2)}{(1-Z^{i+1})(1-Z^{i+2})}\Big).
\]
In addition, since $\Hki(z)=\frac{y}{g}\Gki(y)$, we have
\[
\Hki(z)=Z_{i+1}-Z_{i-1}+C_i\ \!\!^2-C_{i-1}\ \!\!^2.
\]
Using Lemma~\ref{lem:yz_triang} we obtain (after simplifications):
\begin{equation}
  Z_{\infty}=\frac{1}{\tR(1-R)},\ \ C_{\infty}\ \!\!^2=\frac{16\cdot R}{(\tR+1)^2\tR(1-R^2)}.
\end{equation}
Lemma~\ref{lem:yz_triang} and the relation $Y+1/Y+2=1/Q$ also ensure
that $Z\equiv Z(z)$ is the algebraic generating function specified by
\begin{equation}\label{eq:Ztriang}
  Z+\frac{1}{Z}+1=\frac{1}{R}.
\end{equation}
To summarize we obtain:

\begin{proposition}
  For each $i\geq 1$ and $k\geq 4$, the generating function $\Hki(z)$
  has the following expression (which does not depend on $k$):
  \[
  \Hki(z)=Z_{i+1}-Z_{i-1}+C_i\ \!\!^2-C_{i-1}\ \!\!^2,
  \]
  where
  \[
  Z_i=Z_{\infty}\frac{(1-Z^i)(1-Z^{i+2})}{(1-Z^{i+1})^2},\ \
  C_i=C_{\infty}\cdot\Big(1-\frac{\tR+1}{4}Z^i\frac{(1-Z)(1-Z^2)}{(1-Z^{i+1})(1-Z^{i+2})}\Big).
  \]
\end{proposition}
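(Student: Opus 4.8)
The plan is to transpose to the level of faces the substitution scheme already used for simple triangular dissections, exactly as the irreducible \emph{quadrangular} case was obtained from the simple quadrangular one. Concretely, I would first establish the combinatorial substitution: starting from an irreducible core $\kappa\in\cHki$ and patching, symmetrically over each orbit of $k$ faces, a rooted simple triangulation into every triangular face, one recovers each $\gamma\in\cGki$ exactly once. The hypothesis $k\geq 4$ is precisely what guarantees, via Remark~\ref{rk:encloses}, that no separating $3$-cycle strictly encloses the center, so that the emptying/patching operation is unambiguous and leaves the radial distance $i$ unchanged. Bookkeeping the faces then yields the substitution relation $\Gki(y)=\frac{g}{y}\Hki(g^2/y)$, with $g=Q-2Q^2$.

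Next I would introduce the change of variable $z=g(y)^2/y$ together with the uniformizer $R$ defined by $R=z/(1-R)^2$ and its companion $\tR=\sqrt{1+9R}/\sqrt{1+R}$, and invoke Lemma~\ref{lem:yz_triang}: under $z=g^2/y$ one has $R=Q/(1-Q)$, $Q=R/(1+R)$ and $\tQ=\tR$, so that this change of variable acts as the identity on the uniformizing series. This is the crucial point — designing $R$ and $\tR$ so that the substitution becomes trivial in the uniformized variables is what makes the whole computation collapse, and it is the step I expect to be the real obstacle (guessing the correct normalization $g^2/y$ and the square-root correction $\tR$). Once Lemma~\ref{lem:yz_triang} is in hand, everything downstream is mechanical.

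I would then define the scaled series $Z_i:=\frac{y}{g}Y_i$, $Z_\infty:=\frac{y}{g}Y_\infty$, $Z:=Y$, together with $C_i^2:=\frac{y}{g}B_i^2$ and $C_\infty^2:=\frac{y}{g}B_\infty^2$. Since the formula for $Y_i$ in terms of $Y_\infty$ and $Y$ is homogeneous of degree one in $Y_\infty$ (and that for $B_i$ is homogeneous of degree one in $B_\infty$), multiplying the $Y$-identity by $y/g$ and the squared $B$-identity by $y/g$, and using $Z=Y$, transports these formulas verbatim into the stated product formula for $Z_i$ and — after replacing $\tQ$ by $\tR$ via Lemma~\ref{lem:yz_triang} — into the stated formula for $C_i$. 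Combining with $\Hki(z)=\frac{y}{g}\Gki(y)$ and the expression $\Gki=Y_{i+1}-Y_{i-1}+B_i^2-B_{i-1}^2$ from the preceding subsection gives $\Hki=Z_{i+1}-Z_{i-1}+C_i^2-C_{i-1}^2$ at once.

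It then remains to put $Z_\infty$, $C_\infty^2$ and the defining relation for $Z$ in closed form. Here I would substitute $y=Q(1-Q)^3$ and $g=Q(1-2Q)$ to get $y/g=(1-Q)^3/(1-2Q)$, and eliminate $Q$ through $Q=R/(1+R)$, $1-Q=1/(1+R)$, $1-2Q=(1-R)/(1+R)$ and $\tQ=\tR$; this is routine rational algebra and produces $Z_\infty=1/(\tR(1-R))$ and $C_\infty^2=16R/((\tR+1)^2\tR(1-R^2))$. Finally, the relation $Y+1/Y+2=1/Q$ becomes, under the same substitution, $Z+1/Z+1=1/R$, completing the identification. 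The only genuine difficulty is the change-of-variable design encapsulated in Lemma~\ref{lem:yz_triang}; the rest is formal transport of the known formulas plus these simplifications.
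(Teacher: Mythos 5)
Your proposal is correct and takes essentially the same approach as the paper: the face-substitution relation $\Gki(y)=\frac{g}{y}\Hki(g^2/y)$ with $g=Q-2Q^2$, the change of variable $z=g^2/y$ trivialized by Lemma~\ref{lem:yz_triang} (via $R=Q/(1-Q)$, $Q=R/(1+R)$, $\tQ=\tR$), the scaled series $Z_i=\frac{y}{g}Y_i$ and $C_i^2=\frac{y}{g}B_i^2$ transported by homogeneity, and the closing rational simplifications yielding $Z_\infty$, $C_\infty^2$ and $Z+1/Z+1=1/R$. Nothing essential differs from the paper's derivation.
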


\bibliographystyle{plain}
\bibliography{SymQuad}

\end{document}